\newcommand{\op} {\overline{\partial}}
\newcommand{\dbar}{\ensuremath{\overline\partial}}
\newcommand{\C}{\ensuremath{\mathbb{C}}}
\newcommand{\sumprime}{\if@display\sideset{}{'}\sum%
            \else\sum'\fi}
\begin{document}

\numberwithin{equation}{section}

\newtheorem{theorem}{Theorem}[section]
\newtheorem{proposition}[theorem]{Proposition}
\newtheorem{conjecture}[theorem]{Conjecture}
\def\theconjecture{\unskip}
\newtheorem{corollary}[theorem]{Corollary}
\newtheorem{lemma}[theorem]{Lemma}
\newtheorem{observation}[theorem]{Observation}
\newtheorem{definition}{Definition}
\numberwithin{definition}{section} 
\newtheorem{remark}{Remark}
\def\theremark{\unskip}
\newtheorem{kl}{Key Lemma}
\def\thekl{\unskip}
\newtheorem{question}{Question}
\def\thequestion{\unskip}
\newtheorem{example}{Example}
\def\theexample{\unskip}
\newtheorem{problem}{Problem}

\thanks{Research supported by Knut and Alice Wallenberg Foundation, and the China Postdoctoral Science Foundation.}

\address{School of Mathematical Sciences, Fudan University, Shanghai, 200433, China}
\address{Current address: Department of Mathematical Sciences, Chalmers University of Technology and
University of Gothenburg. SE-412 96 Gothenburg, Sweden}
\email{wangxu1113@gmail.com}
\email{xuwa@chalmers.se}

\title[Higher direct image]{Curvature of higher direct image sheaves and its application on negative-curvature criterion for the Weil-Petersson metric}
 \author{Xu Wang}
\date{\today}

\begin{abstract} We shall show that $q$-semipositivity of the vector bundle $E$ over a K\"ahler total space $\mathcal X$ implies the Griffiths-semipositivity of the $q$-th direct image of $\mathcal O(K_{\mathcal X/B}\otimes E)$. As an application, we shall give a negative-curvature criterion for the generalized Weil-Petersson metric on the base manifold.

\bigskip


\smallskip

\noindent{{\sc Keywords}: Higher direct image, Hodge theory, Chern curvature, $\dbar$-equation, Weil-Petersson metric, canonically polarized manifold, Calabi-Yau manifold.}
\end{abstract}
\maketitle

\tableofcontents

\section{Introduction}

\subsection{Set up} Let $\pi: \mathcal X \to B$ be a proper holomorphic submersion from a complex manifold $\mathcal X$ to a connected complex manifold $B$. We call $B$ the base manifold of the fibration $\pi$. Assume that $B$ is $m$-dimensional and each fibre  $X_t:=\pi^{-1}(t)$ of $\pi$ is $n$-dimensional. Let $E$ be a holomorphic vector bundle over $\mathcal X$. Denote by $E_t$ the restriction of $E$ to $X_t$. Let us denote by $K_{\mathcal X/ B}$ the relative canonical bundle on $\mathcal X$. Recall that
\begin{equation}
K_{\mathcal X/B}=K_{\mathcal X}-\pi^{*}(K_{B}).
\end{equation}

Let us denote by $H^{p,q}(E_t)$ the space of $E_t$-valued $\dbar$-closed $(p,q)$-type Dolbeault cohomology classes over $X_t$. Let us consider
\begin{equation}\label{eq:h}
\mathcal H^{p,q}:=\{H^{p,q}(E_t)\}_{t\in B}.
\end{equation}
It is known that $\mathcal H^{p,q}$ has a natural holomorphic vector bundle structure if 

\medskip

\textbf{A:}  \emph{The dimension of $H^{p,q}(E_t)$ does not depend on $t\in B$.}

\medskip

The usual way to prove this fact is to use the base change theorem. In fact, by the base change theorem, $\mathbf A$ implies that the $q$-th direct image sheaf
\begin{equation}
\underline{E}^{p,q}:=R^{q} \pi_*\mathcal O(\wedge^{p} T^*_{\mathcal X/B} \otimes E).
\end{equation}
is locally free and the fibres of the holomorphic vector bundle associated to $\underline{E}^{p,q}$ are 
\begin{equation}
H^{q}(X_t, \mathcal O(\wedge^{p} T^*_{X_t} \otimes E_t )),  \ \ t\in B.
\end{equation}
By Dolbeault's theorem,
\begin{equation}
H^{q}(X_t, \mathcal O(\wedge^{p} T^*_{X_t} \otimes E_t )) \simeq  H^{p,q}(E_t),
\end{equation}
thus there is a holomorphic vector bundle structure on $\mathcal H^{p,q}$.

In this paper, we shall use the Newlander-Nirenberg theorem \cite{NN57} and a theorem of Kodaira-Spencer (see Page 347 in \cite{KSp}) to construct a holomorphic vector bundle structure on $\mathcal H^{p,q}$ directly (see Theorem \ref{th:holomorphic} below).

Assume that $\mathcal X$ possesses a K\"ahler form  $\omega$, put
\begin{equation}\label{eq:omegat}
\omega^t:=\omega|_{X_t}>0, \ \text{on}\ X_t.
\end{equation}
Let $h$ be a Herimitian metric on $E$. By the Hodge theory, every cohomology class in $H^{p,q}(E_t)$ has a unique harmonic representative in 
\begin{equation}\label{eq:dbar-harmonic}
\mathcal H^{p,q}_t:=\ker \dbar^t \cap \ker (\dbar^t)^*,
\end{equation}
where $\dbar^t:=\dbar|_{X_t}$ and $(\dbar^t)^*$ is the adjoint of $\dbar^t$ with respect to $\omega^t$ and $h$. In this paper, we shall identify a cohomology class in $H^{p,q}(E_t)$ with its harmonic representative in $\mathcal H^{p,q}_t$. Thus we have 
\begin{equation}\label{eq:H-harmonic}
\mathcal H^{p,q}=\{\mathcal H^{p,q}_t\}_{t\in B}.
\end{equation}
The natural $L^2$-inner product on each harmonic space $\mathcal H^{p,q}_t$ defines a Hermitian metric on $\mathcal H^{p,q}$. Let us denote by $D$ the associated Chern connection on $\mathcal H^{p,q}$. Put
\begin{equation}\label{eq:D}
D=\sum dt^j\otimes D_{t^j} +d\bar t^j \otimes \dbar_{t^j}.
\end{equation}
Then
\begin{equation}\label{eq:D2}
D^2=\sum dt^j\wedge d\bar t^k \otimes \Theta_{j\bar k}, \  
\Theta_{j\bar k} :=D_{t^j}\dbar_{t^k}-\dbar_{t^k}D_{t^j} .
\end{equation}
Denote by $(\cdot,\cdot)$ and $||\cdot||$ the assocaiated inner product and norm on the fibre, $\mathcal H^{p,q}_t$, of $\mathcal H^{p.q}$ respectively. By definition, $\mathcal H^{p,q}$ is semi-positive in the sense of Griffiths if and only if
\begin{equation*}
\sum (\Theta_{j\bar k} u, u)\xi^j\bar \xi^k \geq 0, \ \forall \ u\in\mathcal H^{p,q}_t, \ \xi\in\mathbb C^m.
\end{equation*}
Moreover,  $\mathcal H^{p,q}$ is semi-positive in the sense of Nakano if and only if 
\begin{equation*}
\sum (\Theta_{j\bar k} u_j, u_k)\geq 0, \ \forall \ u_j\in\mathcal H^{p,q}_t, \ 1\leq  j\leq m.
\end{equation*}
In this paper, we shall give a curvature formula for $\mathcal H^{p,q}$ based on the formulas in \cite{Bern06, Bern09, Bern11} for $\mathcal H^{n,0}$, see \cite{Maitani84} and \cite{MY04} for earlier results and  \cite{Tsuji05}, \cite{Sch12}, \cite{LiuYang13}, \cite{GS15}, \cite{MT07} and \cite{MT08} for other generalizations and related results.  

\subsection{Main result} 

Denote by $\Theta(E,h)$ (resp. $\Theta(E_t,h)$) the curvature operator of the Chern connection
\begin{equation}
d^E:= \dbar +\partial^E, \ (\text {resp. } d^{E_t}:=\dbar^t +\partial^{E_t} ) ,
\end{equation} 
on $(E,h)$ (resp. $(E_t,h)$) respectively.  Put 
\begin{equation}
\omega_q:=\frac{\omega^q}{q!}, \ \ \omega^t_q:=\frac{(\omega^t)^q}{q!}.
\end{equation}
We shall use the following definition:

\begin{definition}\label{de:q-positive} $\Theta(E,h)$ is said to be $q$-semipositive on $\mathcal X$ with respect to $\omega$ if 
\begin{equation}
\omega_q\wedge  c_q\{i\Theta(E,h)u, u\} \geq 0,  \ \text{on}\ \mathcal X,
\end{equation}
for every $E$-valued $(n+m-q-1,0)$-form $u$ in $\mathcal X$. where $c_q=i^{(n+m-q-1)^2}$ is chosen such that
\begin{equation}
\omega_{q+1} \wedge c_q\{u,u\} \geq 0,
\end{equation} 
as a semi-positive volume form on the total space $\mathcal X$.
\end{definition}

Our main theorem is the following:

\begin{theorem}\label{th:main} Assume that the total space is K\"ahler and the dimension of $H^{n,q}(E_t)$ does not depend on $t\in B$. Assume further that $\Theta(E,h)$ is $q$-semipositive with respect to a K\"ahler form on the total space. Then  $\mathcal H^{n,q}$ is Griffiths-semipositive.
\end{theorem}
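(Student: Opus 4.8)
The plan is to reduce the Griffiths-semipositivity of $\mathcal H^{n,q}$ to a pointwise inequality on the total space involving $\Theta(E,h)$, exactly as in Berndtsson's treatment of the $\mathcal H^{n,0}$ case. Fix $t_0\in B$, a tangent vector $\xi\in\mathbb C^m$, and a harmonic representative $u\in\mathcal H^{n,q}_{t_0}$; we must show $\sum(\Theta_{j\bar k}u,u)\xi^j\bar\xi^k\ge 0$. The first step is to pick a holomorphic local frame (or at least a holomorphic section $t\mapsto u_t$ of $\mathcal H^{n,q}$ with $u_{t_0}=u$) provided by the Kodaira–Spencer/Newlander–Nirenberg construction of the holomorphic structure on $\mathcal H^{n,q}$ (Theorem \ref{th:holomorphic}), and then compute $D^2$ on it. Writing everything in terms of a fixed fibre using the diffeomorphism induced by a horizontal lift of $\partial/\partial t^j$ with respect to $\omega$, the covariant derivatives $D_{t^j}$ and $\dbar_{t^k}$ act on families of $E_t$-valued $(n,q)$-forms, and $\Theta_{j\bar k}=[D_{t^j},\dbar_{t^k}]$ splits into a "curvature of $E$" term, a "second fundamental form" term coming from the orthogonal projection onto harmonic forms, and a term involving the curvature of the fibration itself. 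The key algebraic input is that for $(n,q)$-forms the horizontal curvature contribution can be absorbed: Kähler-ness of $\omega$ kills the awkward torsion-type terms, just as in \cite{Bern09, Bern11}.

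Carrying this out, one obtains a curvature formula of the schematic shape
\begin{equation}\label{eq:curv-schema}
(\Theta_{j\bar k}u,u)\xi^j\bar\xi^k = \int_{X_{t_0}} \omega^{t_0}_q\wedge c_q\{i\Theta(E,h)\,\tilde u,\tilde u\} \;+\; \big\|(\text{something harmonic or }\dbar^*\text{-closed})\big\|^2 \;+\;\big\|(\text{a }\dbar\text{-primitive})\big\|^2,
\end{equation}
where $\tilde u$ is the $E$-valued $(n+m-q-1,0)$-form on (a neighbourhood in) $\mathcal X$ obtained by contracting the lifted family $u_t$ against $\xi$ and against the $dt$'s — more precisely, $\tilde u = \delta_{V}(u\wedge \overline{dt})$-type object, built so that restricting to a fibre and wedging with $\omega_q$ recovers the harmonic form $u$. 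Once $u_t$ is honestly holomorphic in $t$, the cross-terms that are not manifestly nonnegative are $\dbar$-exact on each fibre and therefore $L^2$-orthogonal to the harmonic space, so they drop out of $(\Theta_{j\bar k}u,u)$. The first term on the right of \eqref{eq:curv-schema} is $\ge 0$ precisely by Definition \ref{de:q-positive} ($q$-semipositivity of $\Theta(E,h)$ with respect to $\omega$), and the remaining terms are squared norms, hence $\ge 0$. This yields the desired inequality.

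The main obstacle, and the step requiring the most care, is the bookkeeping for $q\ge 1$: unlike the $(n,0)$ case, harmonic $(n,q)$-forms are not closed, the primitive decomposition enters, and $\dbar_{t^k}u_t$ is a $(n,q)$-form whose $\dbar^{t}$-potential must be chosen (the minimal-norm solution, orthogonal to $\mathcal H^{n,q}_t$) in a way that varies smoothly in $t$. Controlling the commutator $[\dbar_{t^j},\dbar^{t*}]$ and verifying that all terms of indefinite sign are genuinely $\dbar^t$-exact — so that they vanish after pairing with the harmonic $u$ — is where the Kähler hypothesis and the precise form of the Kodaira–Spencer holomorphic structure are used essentially. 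I would isolate this as a lemma: \emph{for a holomorphic family $u_t\in\mathcal H^{n,q}_t$, the form $\dbar_{t^k}u_t - P_t(\dbar_{t^k}u_t)$, where $P_t$ is harmonic projection, admits a smooth family of $\dbar^t$-primitives $w_t$ with $\dbar^{t*}w_t=0$}, and then the curvature identity \eqref{eq:curv-schema} follows by integration by parts on each fibre together with the Bochner–Kodaira–Nakano identity to convert the $\|\dbar^{t*}\!\cdot\|^2+\|\dbar^t\!\cdot\|^2$ bookkeeping into the curvature term of $E$. The global/Kähler hypothesis on $\mathcal X$ (as opposed to mere fibrewise Kähler) is what guarantees the horizontal lift is well-behaved and the cohomology bundle's Chern connection has no hidden torsion contributions; I expect this to be used exactly at the point of discarding the fibration-curvature terms.
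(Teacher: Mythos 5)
Your overall architecture (holomorphic structure via Kodaira--Spencer and Newlander--Nirenberg, horizontal lifts with respect to $\omega$, a curvature formula whose leading term is the push-forward of $c_q\{i\Theta(E,h)\,\cdot,\cdot\}\wedge\omega_q$ evaluated on an $(n+m-q-1,0)$-form built from $u$ and the $dt^j$) matches the paper. But there is a genuine gap at the heart of the argument: your schematic curvature formula asserts that everything besides the total-space curvature term is a sum of squared norms, and that the indefinite cross-terms ``drop out'' because they are $\dbar^t$-exact and hence orthogonal to the harmonic space. That is not what happens. The actual formula \eqref{eq:theta_jk} contains the term $-\|a\|^2$ with $a_j^u=D_{t^j}u-i_t^*[\partial^E,\delta_{V_j}]\mathbf u$, i.e.\ minus the squared norm of the non-harmonic part of $i_t^*[\partial^E,\delta_{V_j}]\mathbf u$. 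This term is genuinely negative; orthogonality to $\mathcal H^{n,q}_t$ is exactly why it appears with a minus sign (Pythagoras), not a reason for it to vanish. If your formula with only nonnegative remainders were correct, fibrewise $q$-semipositivity of $\Theta(E_t,h)$ would already suffice, and the hypothesis on the \emph{total-space} curvature would be superfluous --- a strong signal that a negative term has been lost.

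The two ideas needed to close this gap, and which your proposal does not supply, are: (i) the identification of $a_j^u$ as the $L^2$-minimal solution of the \emph{generalized} $\dbar$-equation $\dbar^t a_j^u=\partial^{E_t}b_j^u+c_j^u$ with $c_j^u=(V_j\lrcorner\,\Theta(E,h))|_{X_t}\wedge u^t$ (Lemma \ref{le:abc-later}), combined with the H\"ormander-type a priori estimate of Theorem \ref{th:L2-1}, which bounds $\|a\|^2$ by $\|b\|^2$ plus a term $C_\varepsilon=\bigl(([i\Theta(E_t,h),\Lambda_{\omega^t}]+\varepsilon)^{-1}c,c\bigr)$; and (ii) the construction of a special dual representative $\mathbf u_j^*$ satisfying the pointwise condition \eqref{eq:ABCR}, which is what makes the leftover combination $A+B-C_\varepsilon$ (fibrewise curvature terms \emph{minus} the H\"ormander bound) collapse into the single push-forward $\pi_*\bigl(c_q\{T_\varepsilon\wedge\mathbf u^*,\mathbf u^*\}\bigr)\ge 0$ controlled by Definition \ref{de:q-positive} (Lemma \ref{le: ABC}). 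Your ``$\tilde u=\delta_V(u\wedge\overline{dt})$-type object'' is too loose here: an arbitrary lift will not make the Schur-complement-type cancellation between $A+B$ and $C_\varepsilon$ work. The closing remark about Bochner--Kodaira--Nakano gestures in the right direction but does not substitute for the minimal-solution identification and the $L^2$-estimate for the inhomogeneous equation $\dbar a=\partial^E b+c$, which is where the theorem is actually proved.
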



\textbf{Remark}: The following fact is also true: 

\medskip

\emph{With the assumptions in the above theorem, assume further that 
\begin{equation}\label{eq:main2}
\mathcal H^{n,q}_t= \ker \partial^{E_t} \cap \ker (\partial^{E_t})^*, \ \forall  \ t\in B,
\end{equation}
then $\mathcal H^{n,q}$ is Nakano-semipositive.}

\medskip 

If $q=0$ then \eqref{eq:main2} is always true. If $q\geq 1$, by Siu's $\partial\dbar$-Bochner formula (see \cite{Siu82} or \cite{Bern02}), we know that  \eqref{eq:main2} is true in case
\begin{equation}
 i\Theta(E_t,h)\wedge\omega^t_{q-1}\equiv 0, 
\end{equation} 
on $X_t$ for every $t\in B$.

\subsection{Applications} We shall use our main theorem to study the curvature properties of the base manifold $B$. Let us denote by $\kappa$ the Kodaira-Spencer mapping
\begin{equation}
\kappa :v \mapsto \kappa(v) \in H^{0,1}(T_{X_t})\simeq H^{n,n-1}(T^*_{X_t})^*, \ \forall  \ v\in T_tB, \ t\in B.
\end{equation} 
We shall introduce the following definition:

\begin{definition} We call the pull back pseudo-metric on $T_B$ defined by
\begin{equation}
||v||_{WP}:=||\kappa(v)||_{H^{n,n-1}(T^*_{X_t})^*}, \ \forall  \ v\in T_tB
\end{equation}
the generalized Weil-Petersson metric on $B$.
\end{definition}

Assume that the dimension of $H^{0,1}(T_{X_t})$ does not depend on $t$ in $B$. Then we know that the dimension of the dual space, $H^{n,n-1}(T^*_{X_t})$, of $H^{0,1}(T_{X_t})$ does not depend on $t$ in $B$. Moreover, we shall prove that (see Proposition \ref{pr:ks-dual}) 
\begin{equation}
\kappa: v\mapsto \kappa(v)\in H^{n,n-1}(T^*_{X_t})^*,  \ \forall  \ v\in T_tB,
\end{equation} 
defines a holomorphic bundle map from $T_B$  to the \textbf{dual} bundle of
\begin{equation}
\mathcal H^{n,n-1}:=\{H^{n,n-1}(T^*_{X_t})\}_{t\in B}, 
\end{equation}
Assume that the total space $\mathcal X$ possesses a K\"ahler form $\omega$. We shall introduce the following definition:

\begin{definition} The relative cotangent bundle $T^*_{\mathcal X/B}$ is said to be $(n-1)$-semipositive with respect to $\omega$ if there is a smooth metric, say $h$, on the relative cotangent bundle such that $\Theta(T^*_{\mathcal X/B}, h)$ is $(n-1)$-semipositive with respect to $\omega$.
\end{definition}

By Theorem \ref{th:main}, if the relative cotangent bundle is $(n-1)$-semipositive with respect to $\omega$ then $\mathcal H^{n,n-1}$ is Griffiths-semipositive. Assume further that $\kappa$ is injective. Then $T_B$ is Griffiths-seminegative with respect to the generalized Weil-Petersson metric. Inspired by \cite{Raufi15} and \cite{BernPaun08}, we shall introduce the following definition:

\begin{definition} $||\cdot||_{WP}$ defines a Griffiths-seminegative singular metric  if and only if for every holomorphic vector field $v: t \mapsto v^t$ on the base, $\log ||v||_{WP}$ is plurisubharmonic or equal to $-\infty$ identically, where $||v||_{WP}$ denotes the upper semicontinuous regularization of $t\mapsto ||v^t||_{WP}$.
\end{definition}

We shall prove that:

\begin{theorem}\label{th:wp-general} Let $\pi$ be a proper holomorphic submersion from a K\"ahler manifold $(\mathcal X, \omega)$ to a complex manifold $B$. Assume that the relative cotangent bundle is $(n-1)$-semipositive with respect to $\omega$. Then the associated generalized Weil-Petersson metric defines a Griffiths-seminegative singular metric on $T_B$. 
\end{theorem}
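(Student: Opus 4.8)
The plan is to deduce Theorem~\ref{th:wp-general} from Theorem~\ref{th:main} together with the duality statement Proposition~\ref{pr:ks-dual}, reducing everything to a statement about the pull-back metric under the bundle map $\kappa$. First I would invoke Theorem~\ref{th:main} with $p=n$, $q=n-1$ and $E=T^*_{\mathcal X/B}$ equipped with a metric $h$ witnessing the $(n-1)$-semipositivity: this gives that $\mathcal H^{n,n-1}$, with its $L^2$-metric, is Griffiths-semipositive. Dualizing, the dual bundle $(\mathcal H^{n,n-1})^*$ is Griffiths-seminegative. Then, by Proposition~\ref{pr:ks-dual}, $\kappa$ is a \emph{holomorphic} bundle map $T_B \to (\mathcal H^{n,n-1})^*$, and $||\cdot||_{WP}$ is by definition the pull-back along $\kappa$ of the $L^2$-norm on $(\mathcal H^{n,n-1})^*$. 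So the theorem reduces to the following general principle: if $\mathcal E$ is a Hermitian holomorphic vector bundle which is Griffiths-seminegative (genuinely, i.e. with a smooth metric), and $f: T_B \to \mathcal E$ is a holomorphic bundle map, then for every local holomorphic section $v$ of $T_B$ the function $\log ||f(v)||$ is plurisubharmonic or $\equiv -\infty$.

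The heart of the matter is thus this last principle, and I would prove it by the standard computation behind Griffiths-negativity: for a \emph{nowhere-vanishing} holomorphic section $s=f(v)$ of a Griffiths-negative bundle, one has the classical inequality
\begin{equation*}
i\partial\dbar \log ||s||^2 \geq -\frac{(i\Theta(\mathcal E)s,s)}{||s||^2} + \text{(Cauchy--Schwarz correction)} \geq 0,
\end{equation*}
i.e. $\log||s||^2$ is plurisubharmonic; the Cauchy--Schwarz step uses precisely that $||s'||^2||s||^2 \geq |(s',s)|^2$ pointwise, and the curvature term has the right sign because $\mathcal E$ is Griffiths-seminegative. The subtlety is that $f(v)$ may vanish on a proper analytic subset, or $v$ itself may vanish; away from the zero set the above shows plurisubharmonicity, and then one removes the (pluripolar) exceptional set by the standard extension theorem for plurisubharmonic functions that are locally bounded above — noting that $\log ||f(v)||$ \emph{is} bounded above locally since $f$ and the metric are smooth — so the upper semicontinuous regularization extends the function across the zero set as a global plurisubharmonic function (or it is $\equiv -\infty$, which happens exactly when $f(v)\equiv 0$). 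This is exactly the mechanism that the definition of ``Griffiths-seminegative singular metric'' is set up to capture, and it is why the definition takes the u.s.c. regularization.

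A technical point worth spelling out is that $\mathcal H^{n,n-1}$ carries a genuine smooth Hermitian metric (the fibrewise $L^2$-product), since assumption $\mathbf A$ holds for $p=n,q=n-1$ by hypothesis, so Theorem~\ref{th:main} applies verbatim and there is no need to deal with singular metrics on the bundle side — only the pulled-back metric on $T_B$ becomes singular, and only through the possible degeneration of $\kappa$. I would also remark that $\kappa$ need not be assumed injective here: if $\kappa(v^t)=0$ for some $t$ then $||v^t||_{WP}=0$ and $\log||v||_{WP}$ simply takes the value $-\infty$ there, which is allowed; and if $\kappa\equiv 0$ along the image of $v$ then $\log||v||_{WP}\equiv-\infty$, also allowed.

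I expect the main obstacle to be the careful handling of the zero locus of $f(v)=\kappa(v)$: one must check that $\log||\kappa(v)||$ is locally bounded above (clear from smoothness), that it is plurisubharmonic off the zero set (the curvature computation above), and then that its u.s.c. regularization is globally plurisubharmonic by the removable-singularity theorem for psh functions across pluripolar sets — together with the dichotomy that forces the value $-\infty$ identically in the degenerate case. Everything else is a direct quotation of Theorem~\ref{th:main} and Proposition~\ref{pr:ks-dual}.
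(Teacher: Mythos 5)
Your overall route is the paper's route: quote Theorem \ref{th:main} for $E=T^*_{\mathcal X/B}$, $p=n$, $q=n-1$, dualize, use Proposition \ref{pr:ks-dual} to view $\kappa$ as a holomorphic bundle map into the Griffiths-seminegative dual bundle, and conclude that $\log\|\kappa(v)\|$ is plurisubharmonic off a bad set and extends by the removable-singularity theorem for psh functions that are locally bounded above. The pull-back principle you isolate is correct and is exactly what the paper uses implicitly.

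There is, however, one genuine error in your argument: the assertion that assumption $\mathbf A$ (constancy of $\dim H^{n,n-1}(T^*_{X_t})$) ``holds by hypothesis.'' It does not. Theorem \ref{th:wp-general} assumes only that $\pi$ is a proper submersion from a K\"ahler manifold with $(n-1)$-semipositive relative cotangent bundle; the Hodge numbers may jump on a proper analytic subset of $B$, and by semicontinuity they are constant only on some Zariski-open $B_0\subset B$. Consequently $\mathcal H^{n,n-1}$, its $L^2$-metric, and the holomorphic bundle map of Proposition \ref{pr:ks-dual} are only available over $B_0$, and Theorem \ref{th:main} applies only there --- this is precisely why the statement is phrased in terms of a \emph{singular} Griffiths-seminegative metric rather than a smooth one. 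Your removable-singularity step must therefore absorb not only the zero locus of $\kappa(v)$ but also the jumping locus $B\setminus B_0$, and on the latter your justification of the local upper bound (``clear from smoothness of $f$ and the metric'') breaks down, since there is no smooth bundle metric there. The repair is the paper's: choose a fixed smooth $(1,0)$-lift $V$ of $v$ and use
\begin{equation*}
\|v^t\|_{WP}\ \leq\ \big\|(\dbar V)|_{X_t}\big\|_{H^{0,1}(T_{X_t})},
\end{equation*}
the right-hand side being the $L^2$-norm of an explicit representative of $\kappa(v^t)$, which dominates the norm of its harmonic part and depends smoothly on $t$ across all of $B$. With that bound in hand, psh-ness on $B_0$ minus the zero locus plus local boundedness above yields the global statement after u.s.c.\ regularization, exactly as you describe for the zero locus alone.
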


\textbf{Remark A}: If the canonical line bundle of each fibre is positive then by Aubin-Yau's theorem (see \cite{Aubin78} and \cite{Yau78}), each fibre possesses a unique K\"ahler-Einstein metric, which defines a smooth Hermitian metric, say $h$, on the relative cotangent bundle. Then we know that the cotangent bundle of \emph{each fibre} is $(n-1)$-semipositive. Moreover, if $n=1$ then it is well known that relative cotangent bundle is $0$-semipositive. But for a general canonically polarized family, we don't know whether the relative cotangent bundle is $(n-1)$-semipositive or not, for related results, say \cite{Sch12}. In \cite{BPW16}, we shall introduce another way to study the curvature properties of the base manifold of  a canonically polarized family.

\medskip

\textbf{Remark B}: Given a K\"ahler total space  $(\mathcal X, \omega)$, if the canonical line bundle of each fibre is trivial then by Yau's theorem \cite{Yau78}, we know that there is a smooth function, say $\phi$, on $\mathcal X$ such that $\omega+i\partial\dbar\phi$ is Ricci-flat on each fibre. Let us denote by $h$ the smooth Hermitian metric on the relative cotangent bundle defined by $\omega+i\partial\dbar\phi$. Then we know that the cotangent bundle of \emph{each fibre} is $(n-1)$-semipositive. But we don't know whether the relative cotangent bundle is $(n-1)$-semipositive or not, except for some special case, e.g. deformation of torus or other families with flat relative cotangent bundle. In \cite{Wang16-1}, we shall introduce another metric to study the base manifold of a Calabi-Yau family.

\subsection{List of notations} \ \

\medskip

\textbf{Basic notions:}

1. $\pi: \mathcal X \to B$ is a proper holomorphic submersion, $E$: holomorphic vector bundle on $\mathcal X$;

2. $X_t:=\pi^{-1}(t)$ is the fibre at $t$, $E_t:=E|_{X_t}$;

3. $d^E:=\dbar+\partial^E$ is the Chern connection on $E$, $d^{E_t}:=\dbar^t +\partial^{E_t} $ is its restriction;

4. $\Theta(E,h):=(d^E)^2$ is Chern curvature of $E$, $\Theta(E_t,h):=(d^{E_t})^2$;

5. $H^{p,q}(E_t)$:  Dolbeault cohomology group on $X_t$;

6. $\mathcal H^{p,q}_t := \ker \dbar^t \cap \ker (\dbar^t)^* \simeq H^{p,q}(E_t)$ is the harmonic space;

7. $\mathcal H^{p,q}:=\{\mathcal H^{p,q}_t\}_{t\in B} \simeq \{H^{p,q}(E_t)\}_{t\in B} $;

8. $D=\sum dt^j\otimes D_{t^j} +d\bar t^j \otimes \dbar_{t^j}$ is the Chern connection on $\mathcal H^{p,q}$;

9. $\Theta_{j\bar k}:=D_{t^j}\dbar_{t^k}-\dbar_{t^k}D_{t^j}$ is the curvature operator on $\mathcal H^{p,q}$.

\medskip

\textbf{Other notations:}

1. $i_t$: the inclusion mapping $X_t\hookrightarrow \mathcal X$;

2. $t$: local holomorphic coordinate system on $B$, $t^j$: components of $t$;

3. $\delta_{V}:=V~\lrcorner ~$ means contraction of a form with a vector field $V$;

4. $V_j$: smooth $(1,0)$-vector field on $\mathcal X$ such that $\pi_* V_j =\partial/\partial t^j$, $L_{V_j}$: usual Lie-derivative;

5. $L_j:=d^E\delta_{V_j}+\delta_{V_j} d^E=[d^E, \delta_{V_j}], \ \ L_{\bar j} :=d^E \delta_{\overline{V_j}} +\delta_{\overline{V_j}} d^E =[d^E, \delta_{\overline{V_j}}]$;

6. $u: t\mapsto u^t\in \mathcal H^{p,q}_t$ is a section of $\mathcal H^{p,q}$;

7. $\Gamma(\mathcal H^{p,q})$: space of smooth sections of $\mathcal H^{p,q}$;

8. a smooth $E$-valued $(p,q)$-form $\mathbf u$ on $\mathcal X$: a representative of $u\in \Gamma(\mathcal H^{p,q})$;

9: $*$: Hodge-Poincar\'e-de Rham star operator;

10: $\mathbf u^*$: dual-representative of $u$ such that $ \mathbf u^*|_{X_t}=*u^t$ for every $t\in B$;

11: $\mathbb H^t$: orthogonal projection to $\mathcal H^{p,q}_t$.

12: $\kappa : \partial/\partial t^j \mapsto \kappa(\partial/\partial t^j)$ is the Kodaira-Spencer mapping.

\section{Motivation: the product case} 

\subsection{Griffiths-positivity of the bundle of harmonic forms} We shall give an example to show the ideas behind the proof of Theorem \ref{th:main}.

Let us consider the following product case: Let $L$ be a holomorphic line bundle over a compact K\"ahler manifold $(X,\omega)$. Let $h$ be a fixed smooth metric on $L$ and let $\phi$ be a smooth function on
\begin{equation}
\mathcal X:=X\times \mathbb B.
\end{equation} 
Put
\begin{equation}
h^t=he^{-\phi^t}, \ \ \phi^t:=\phi|_{X\times\{t\}}.
\end{equation}
Thus $\{h^t\}_{t\in B}$ defines a smooth metric, say $\tilde h$, on $L\times \mathbb B$. We shall consider
\begin{equation}
\mathcal H^{n,q}:=H^{n,q}(L)\times \mathbb B=\{\mathcal H^{n,q}_t\}_{t\in\mathbb B},
\end{equation} 
where $\mathbb B$ is the unit ball in $\C^m$ and each $H^{n,q}_t$ is the harmonic space with respect to $\omega$ and $h^t$. We know that $\mathcal H^{n,q}$ is a trivial vector bundle with non-trivial metric. 

For every $u\in H^{n,q}(L)$ there is an associated holomorphic section
\begin{equation}
 t \mapsto u^t:=\mathbb H^t(\mathbf u),
\end{equation}
where each $\mathbf u$ is a fixed $\dbar_X$-closed representative of $u$ and $\mathbb H^t$ is the orthogonal projection to the harmonic space $\mathcal H^{n,q}_t$. Now we know that
\begin{equation}
(u,v): t\mapsto (u^t, v^t),  \ \ u,v\in H^{n,q}(L)
\end{equation}
is a smooth function on $\mathbb B$. Moreover, we have
\begin{equation}
(u,v)_{j\bar k}= (D_{t^j}u^t, D_{t^k}v^t)-(\Theta_{j\bar k}u^t,v^t).
\end{equation}
By definition, we have
\begin{equation}
(u^t, v^t)= \int_X \{u^t, *v^t\} e^{-\phi^t},
\end{equation}
where $\{\cdot, \cdot\}$ is the pairing associated to $h$. Thus we have
\begin{equation}
(u,v)_j=(L_j u^t, v^t)+ \int_X \{u^t, L_{\bar j}*v^t\} e^{-\phi^t},
\end{equation}
where
\begin{equation}
L_j:=\partial/\partial t^j-\phi_j\cdot,  \ \ L_{\bar j}:=\partial/\partial \bar t^j.
\end{equation}
Since $(u^t)^* \wedge \omega_q=i^{(n-q)^2}(-1)^{n-q} u^t$, we have
\begin{equation}\label{eq:L*}
L_j*=*L_j, \ \ L_{\bar j}*=*L_{\bar j}, \ \forall \ 1\leq j\leq m.
\end{equation}
Now we have
\begin{equation}
(u,v)_j=(L_j u^t, v^t)+ (u^t, L_{\bar j}v^t).
\end{equation}
Let $v$ be a fixed (does not depend on $t$) $\dbar_X$-closed representative of $v$. Thus each $v^t-\mathbf v$ is $\dbar_X$-exact, which implies that
\begin{equation}
L_{\bar j} v^t=L_{\bar j} (v^t-\mathbf v),
\end{equation}
is $\dbar_X$-exact. Then we have
\begin{equation}
(u^t,  L_{\bar j} v^t) \equiv 0.
\end{equation}
Hence we get that
\begin{equation}
D_{t^j}u^t =\mathbb H^t(L_j u^t),
\end{equation}
and
\begin{equation}
(u,v)_{j\bar k}=(L_ju^t, v^t)_{\bar k}=(L_{\bar k}L_j u^t, v^t)+(L_j u^t, L_k v^t).
\end{equation}
Notice that
\begin{equation}
L_{\bar k}L_j u^t =L_{j}L_{\bar k}u^t -\phi_{j\bar k} u^t.
\end{equation}
Then we have
\begin{equation}
(L_{\bar k}L_j u^t, v^t)= -(\phi_{j\bar k} u^t,  v^t) + (L_{\bar k}u^t, v^t)_j -(L_{\bar k}u^t, L_{\bar j}v^t).
\end{equation}
Since $(L_{\bar k}u^t, v^t) \equiv 0$, thus we have
\begin{equation}
(u,v)_{j\bar k}=-(\phi_{j\bar k} u^t,  v^t)  -(L_{\bar k}u^t, L_{\bar j}v^t) +(L_j u^t, L_k v^t).
\end{equation}
The following lemma is a crucial step (see Lemma  \ref{le:abc-later} for the general case).

\begin{lemma} Put $a^t_j=D_{t^j} u^t-L_j u^t$, then each $a_j^t$ is the $L^2$-minimal solution of
\begin{equation}
\dbar_X a_j^t=-\dbar_X L_j u^t=\dbar_X\phi_j\wedge u^t.
\end{equation}
\end{lemma}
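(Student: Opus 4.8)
\emph{Strategy.} I would start from the identity $D_{t^j}u^t=\mathbb{H}^t(L_ju^t)$ obtained just above, which lets one write $a^t_j=D_{t^j}u^t-L_ju^t=-(\mathrm{Id}-\mathbb{H}^t)(L_ju^t)$, so that $a^t_j$ is, up to sign, the non-harmonic part of $L_ju^t$. The argument then splits into verifying the $\dbar_X$-equation and verifying minimality.

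\emph{The $\dbar_X$-equation.} Since $\mathbb{H}^t(L_ju^t)$ is harmonic it is $\dbar_X$-closed, hence $\dbar_X a^t_j=-\dbar_X L_ju^t$. Writing $L_ju^t=\partial_{t^j}u^t-\phi_ju^t$ and using that $\partial/\partial t^j$ commutes with $\dbar_X$, the Leibniz rule, and $\dbar_X u^t=0$, one gets $-\dbar_X L_ju^t=\dbar_X\phi_j\wedge u^t$, which moreover equals $\dbar_X(\phi_ju^t)$; in particular the equation is solvable and the two right-hand sides in the statement coincide.

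\emph{Minimality.} On the compact fibre, a solution $a$ of $\dbar_X a=g$ is the $L^2$-minimal one precisely when $a\perp\ker\dbar_X$, equivalently when $\mathbb{H}^t(a)=0$ and $\dbar_X^{*,t}a=0$, where $\dbar_X^{*,t}$ denotes the formal adjoint of $\dbar_X$ with respect to $\omega$ and the current fibre metric $h^t$. From $a^t_j=\mathbb{H}^t(L_ju^t)-L_ju^t$ one reads off $\mathbb{H}^t(a^t_j)=0$, and since harmonic forms are $\dbar_X^{*,t}$-closed one has $\dbar_X^{*,t}a^t_j=-\dbar_X^{*,t}(L_ju^t)$. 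So everything reduces to the single identity $\dbar_X^{*,t}(L_ju^t)=0$. To prove it I would exploit that in the product situation the only $t$-dependence of $\dbar_X^{*,t}$ is through the weight, so that $\dbar_X^{*,t}=e^{\phi^t}\circ\dbar_X^{*,h}\circ e^{-\phi^t}$ with $\dbar_X^{*,h}$ (the adjoint for $\omega$ and $h$) independent of $t$; differentiating in $t^j$ yields the operator identity $\partial_{t^j}\dbar_X^{*,t}=-[\dbar_X^{*,t},\phi_j\cdot]$. Now expand $\dbar_X^{*,t}(L_ju^t)=\dbar_X^{*,t}(\partial_{t^j}u^t)-\dbar_X^{*,t}(\phi_ju^t)$: differentiating the harmonicity relation $\dbar_X^{*,t}u^t=0$ in $t^j$ gives $\dbar_X^{*,t}(\partial_{t^j}u^t)=-(\partial_{t^j}\dbar_X^{*,t})u^t$, while $\dbar_X^{*,t}u^t=0$ gives $\dbar_X^{*,t}(\phi_ju^t)=[\dbar_X^{*,t},\phi_j\cdot]u^t$; by the operator identity the two terms are equal, so $\dbar_X^{*,t}(L_ju^t)=0$, which finishes the proof.

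\emph{Main obstacle.} The first two steps are essentially formal; the real content is the codifferential identity $\partial_{t^j}\dbar_X^{*,t}=-[\dbar_X^{*,t},\phi_j\cdot]$, that is, identifying the variation of the Hodge adjoint with the commutator against the weight-derivative. The conjugation formula $\dbar_X^{*,t}=e^{\phi^t}\circ\dbar_X^{*,h}\circ e^{-\phi^t}$ makes this transparent (equivalently one can use the K\"ahler identity $\dbar_X^{*,t}=-i[\Lambda_\omega,\partial^{L,t}]$ on the fibre, where $\partial^{L,t}$ is the $(1,0)$-part of the Chern connection of $(L,h^t)$, together with $\partial^{L,t}=\partial^L-\partial_X\phi^t\wedge\cdot$), but one has to be careful with conjugation and sign conventions, since $\phi_j=\partial\phi/\partial t^j$ is complex even though $\phi$ is real. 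Note finally that for $q=0$ the identity $\dbar_X^{*,t}(L_ju^t)=0$ holds automatically because $\dbar_X^{*,t}$ annihilates all $(n,0)$-forms, recovering the classical statement used by Berndtsson for $\mathcal{H}^{n,0}$.
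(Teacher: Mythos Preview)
Your proof is correct. Both you and the paper reduce to showing $\dbar_X^{*,t}(L_ju^t)=0$, but you reach it by a different route: the paper passes through the Hodge star, using $\dbar_X^{*,t}=-*(\partial_X-\partial_X\phi^t)*$ together with the commutation $L_j*=*L_j$ (which holds because $*$ depends only on $\omega$, not on $t$), and then checks the operator identity $[\partial_X-\partial_X\phi^t,\,L_j]=0$ by hand. You instead exploit the weight-conjugation $\dbar_X^{*,t}=e^{\phi^t}\dbar_X^{*,h}e^{-\phi^t}$ and differentiate it directly, which is arguably the most transparent way to see the cancellation in this product situation. The trade-off is that the paper's Hodge-star argument is the one that generalizes cleanly to genuine fibrations (compare Lemma~\ref{le:abc-later}), where there is no global conjugation formula of your type; your argument, on the other hand, avoids the bookkeeping of $*$ on $(n,q)$-forms and makes the dependence on the weight $\phi^t$ completely explicit.
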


\begin{proof} It suffices to show that $\dbar_X^*(a^t)\equiv 0$. Since 
$$ 
\dbar_X^*=-*(\partial_X-\partial_X\phi^t)*, \ 
\dbar_X^*(a^t)=-\dbar_X^*(L_j u^t),
$$ 
it is enough to show $*L_j u^t$ is $(\partial_X-\partial_X\phi^t)$-closed. By \eqref{eq:L*}, we have
\begin{equation}
*L_j u^t =L_j *u^t.
\end{equation}
Thus 
\begin{equation}
(\partial_X-\partial_X\phi^t)  *L_j u^t =(\partial_X-\partial_X\phi^t)  L_j * u^t.
\end{equation}
Since each $u^t$ is harmonic, thus 
\begin{equation}
(\partial_X-\partial_X\phi^t)  * u^t \equiv 0.
\end{equation}
Thus it is enough to show that
\begin{equation}
[\partial_X-\partial_X\phi^t ,  L_j]\equiv 0,
\end{equation}
which follows by direct computation.
\end{proof}

Fix $u_1,\cdots, u_m\in H^{n,q}(L)$, then we have
\begin{equation}\label{eq:theta-product}
\sum (\Theta_{j\bar k} u_j^t, u_k^t)=-||a^t||^2+\sum (\phi_{j\bar k} u_j^t, u_k^t)+\sum (L_{\bar k}u_j^t, L_{\bar j}u_k^t) ,
\end{equation}
where
\begin{equation}
a^t:= \sum a_j^t, \ \ \dbar_X a^t=\sum \dbar_X\phi_j\wedge u_j^t:=c^t
\end{equation}
By the classical Bochner-Kodaira-Nakano formula, if 
\begin{equation}\label{eq:motivation-positivity}
i^{(n-q-1)^2}\omega_q\wedge \{i\Theta(L,h^t)u,u\} >0, \ \text{on}\ X,
\end{equation} 
for every $(n-q-1,0)$-form $u$ that has no zero point in $X$ then we have
\begin{equation}
||a^t||^2 \leq \left([i\Theta(L,h^t), \Lambda_\omega]^{-1} c^t , c^t\right)=\sum  \left(T_{j\bar k} u_j^t , u_k^t\right),
\end{equation}
where $\Lambda_\omega$ is the adjoint of $\omega \wedge$ and
\begin{equation}
T_{j\bar k}:= (\dbar_X\phi_k\wedge\cdot)^*[i\Theta(L,h^t), \Lambda_\omega]^{-1} (\dbar_X\phi_j\wedge\cdot)
\end{equation}
Now we have
\begin{equation}\label{eq:theta-product-1}
\sum (\Theta_{j\bar k} u_j^t, u_k^t)= R+\sum\left( (\phi_{j\bar k}-T_{j\bar k}) u_j^t, u_k^t\right)+\sum (L_{\bar k}u_j^t, L_{\bar j}u_k^t),
\end{equation}
where
\begin{equation}
R:= \left([i\Theta(L,h^t), \Lambda_\omega]^{-1} c^t , c^t\right)- ||a^t||^2\geq 0.
\end{equation}
We shall use the following lemma (see Lemma \ref{le: ABC} for the general case):

\begin{lemma} Assume that \eqref{eq:motivation-positivity} is true for every $t\in \mathbb B$. Then we have
\begin{equation}
\sum\left( (\phi_{j\bar k}-T_{j\bar k}) u_j^t, u_k^t\right) \wedge i^{m^2}dt\wedge \overline{dt}=\pi_*\left( i^{(m+n-1-q)^2} \omega_q\wedge\{i\Theta( L\times \mathbb B,\tilde h)\mathbf u^*, \mathbf u^*\}\right),
\end{equation}
where $\mathbf u^*:= \mathbf u_j^* \wedge (\partial/\partial t^j \lrcorner ~dt)$ and each $\mathbf u_j^*$ satisfies that
\begin{equation}
i_t^* \left( \partial/\partial t^j  \lrcorner ~ (\omega_q \wedge \Theta( L\times \mathbb B,\tilde h) \mathbf u_j^*) \right)\equiv 0, \ \ i_t^* \mathbf u_j^* =*u_j^t,
\end{equation}
on $X$ for every $t\in\mathbb B$.
\end{lemma}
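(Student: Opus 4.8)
The plan is to compute the fibre push-forward on the right-hand side directly in the product coordinates $(z,t)\in X\times\mathbb B$ and to read off the coefficient of $dt\wedge\overline{dt}$. Since $\tilde h=he^{-\phi}$ and $h$ is independent of $t$, the curvature of $L\times\mathbb B$ decomposes, up to signs and factors of $i$, as
\begin{equation}
i\Theta(L\times\mathbb B,\tilde h)=\Theta^{\mathrm{ver}}+i\sum_j dt^j\wedge\dbar_X\phi_j+i\sum_k\overline{\dbar_X\phi_k}\wedge d\bar t^k+i\sum_{j,k}\phi_{j\bar k}\,dt^j\wedge d\bar t^k,
\end{equation}
where $\Theta^{\mathrm{ver}}$ involves only $dz,d\bar z$ and satisfies $i_t^*\Theta^{\mathrm{ver}}=i\Theta(L,h^t)$. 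Writing $\mathbf u^*=\sum_j\mathbf u_j^*\wedge(\partial/\partial t^j\lrcorner~dt)$ and decomposing each $\mathbf u_j^*$ --- a form of type $(n-q,0)$ on $\mathcal X$ --- by the number of $dt$-factors, the only pieces of $\mathbf u_j^*$ surviving the wedge with $\partial/\partial t^j\lrcorner~dt$ are the $dt$-free part $\mathbf v_j$ (which by $i_t^*\mathbf u_j^*=*u_j^t$ is a smooth extension of $*u_j^t$) and the coefficient $\mathbf p_j$ of $dt^j$; thus $\mathbf u^*=\sum_j\mathbf v_j\wedge(\partial/\partial t^j\lrcorner~dt)\pm\big(\sum_j\mathbf p_j\big)\wedge dt$, so only $\mathbf v_j$ and the fibre restriction of $\mathbf p_j$ can enter the push-forward.

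First I would unravel the normalization. Contracting $\omega_q\wedge i\Theta(L\times\mathbb B,\tilde h)\mathbf u_j^*$ with $\partial/\partial t^j$ and restricting to $X_t$, all $dt,d\bar t$ drop out and one is left with the fibrewise identity
\begin{equation}
\omega_q\wedge i\Theta(L,h^t)\wedge i_t^*\mathbf p_j=\mp\,\omega_q\wedge i\,\dbar_X\phi_j\wedge *u_j^t\qquad\text{on }X_t .
\end{equation}
By the standard K\"ahler (Bochner-Kodaira-Nakano) identities on the fibre --- the Lefschetz decomposition together with the Weil formula for $*$ --- wedging with $\omega_q$ and $i\Theta(L,h^t)$ carries $*(\cdot)$ on $(n,q+1)$-forms into a fixed multiple of $[i\Theta(L,h^t),\Lambda_\omega](\cdot)$, while $*u_j^t\wedge\omega_q\propto u_j^t$ turns the right-hand side into a multiple of $\dbar_X\phi_j\wedge u_j^t$. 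Hence the normalization says exactly that $i_t^*\mathbf p_j=*\psi_j^t$, where $\psi_j^t$ is the $L^2$-minimal solution of $[i\Theta(L,h^t),\Lambda_\omega]\psi_j^t=\dbar_X\phi_j\wedge u_j^t$; the inverse operator exists thanks to \eqref{eq:motivation-positivity}. In other words, $\mathbf p_j$ carries, fibrewise, the Hodge-star dual of precisely the H\"ormander solution out of which $T_{j\bar k}$ is built.

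With this description in hand, I would expand $\omega_q\wedge\{i\Theta(L\times\mathbb B,\tilde h)\mathbf u^*,\mathbf u^*\}$ using the splitting of the curvature and collect the terms carrying a full $dt\wedge\overline{dt}$. The purely horizontal part $\phi_{j\bar k}\,dt^j\wedge d\bar t^k$ pairs $\mathbf v_j$ against $\overline{\mathbf v_k}$; fibre-integration, via $i_t^*\mathbf v_j=*u_j^t$, $*u_j^t\wedge\omega_q\propto u_j^t$ and $(u_j^t,u_k^t)=\int_X\{u_j^t,*u_k^t\}e^{-\phi^t}$ (the $e^{-\phi^t}$ coming from the fibre restriction of $\tilde h$), turns it into $\sum_{j,k}(\phi_{j\bar k}u_j^t,u_k^t)$. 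The two mixed curvature parts pair $\dbar_X\phi_j\wedge\mathbf v_j$ against $\overline{\mathbf p_k}$ and $\mathbf p_j$ against $\overline{\dbar_X\phi_k\wedge\mathbf v_k}$, while $\Theta^{\mathrm{ver}}$ pairs $\mathbf p_j$ against $\overline{\mathbf p_k}$; substituting $i_t^*\mathbf p_j=*\psi_j^t$ and $[i\Theta(L,h^t),\Lambda_\omega]\psi_j^t=\dbar_X\phi_j\wedge u_j^t$, each of these fibre-integrals becomes a constant multiple of $\big([i\Theta(L,h^t),\Lambda_\omega]^{-1}(\dbar_X\phi_j\wedge u_j^t),\dbar_X\phi_k\wedge u_k^t\big)=(T_{j\bar k}u_j^t,u_k^t)$ (using self-adjointness of the inverse), and the three combine --- together with the normalizing constants $i^{m^2}$ and $i^{(m+n-1-q)^2}$ --- to $-\sum_{j,k}(T_{j\bar k}u_j^t,u_k^t)$. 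Adding the two contributions yields the asserted identity.

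The main obstacle is the middle step: carrying out the Hodge-star and Lefschetz bookkeeping without sign errors. Because $\mathbf u_j^*$ is normalized through its restriction $*u^t$, the source $\dbar_X\phi_j\wedge\cdot$, the curvature operator $[i\Theta(L,h^t),\Lambda_\omega]$ and the adjoint $(\dbar_X\phi_k\wedge\cdot)^*$ must all be transported through $*$ and through the Lefschetz map $\alpha\mapsto\omega_q\wedge\alpha$, and one has to verify both that the normalization is equivalent to the fibrewise H\"ormander equation with the correct source and constant, and that the three $T_{j\bar k}$-contributions assemble into exactly $-T_{j\bar k}$ and not some other multiple. The positivity hypothesis \eqref{eq:motivation-positivity} is used only to guarantee the solvability and uniqueness of $\psi_j^t$, hence that the normalization defining $\mathbf u_j^*$ can be met.
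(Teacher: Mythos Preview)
Your proposal is correct in outline and reaches the same identification as the paper: the normalization forces $i_t^*\mathbf p_j$ to be the Hodge dual of the $[i\Theta(L,h^t),\Lambda_\omega]^{-1}$--solution, and the push-forward then returns $\sum(\phi_{j\bar k}-T_{j\bar k})u_j^t,u_k^t)$. The paper does not prove this lemma directly but defers to the general Lemma~\ref{le: ABC}, whose argument is organized differently from yours.

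The paper packages everything as $T_\varepsilon=\omega_q\wedge i\Theta+\varepsilon\,\omega_{q+1}$ and applies the Leibniz rule for $\delta_{V_j}\delta_{\overline{V_k}}$ to the \emph{full pairing} $\{T_\varepsilon\wedge\mathbf u_j^*,\mathbf u_k^*\}$, obtaining four terms. The normalization condition $i_t^*\delta_{V_j}(T_\varepsilon\wedge\mathbf u_j^*)=0$ is then used as a vanishing/identity device: it collapses two of those four terms, and what remains is immediately $I_\varepsilon$ times $dt\wedge\overline{dt}$. No separate tracking of ``three $T_{j\bar k}$-contributions'' is needed, and the signs are absorbed into two short algebraic identities (equations \eqref{eq:ABCR} through the end of that proof). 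By contrast, you split both the curvature and $\mathbf u^*$ into their horizontal/mixed/vertical pieces and multiply out; this is the same computation, but it forces you to verify by hand that the $\Theta^{\mathrm{ver}}$-term and the two mixed terms really assemble to $-T_{j\bar k}$ rather than, say, $+T_{j\bar k}$ or $-3T_{j\bar k}$. That is exactly the ``main obstacle'' you flag, and it is precisely what the paper's Leibniz-rule organization sidesteps. If you want to finish your route cleanly, mimic the paper's trick: instead of expanding, contract $\{T\wedge\mathbf u_j^*,\mathbf u_k^*\}$ with $\partial/\partial t^j$ and $\partial/\partial\bar t^k$ and use the normalization twice; the combinatorics then reduce to a single cancellation.
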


Thus we get the following result (see Theorem \ref{th:main} for the general case):

\begin{theorem}\label{th:2.3} Assume that 
\begin{equation}
\omega_q \wedge i\Theta( L\times \mathbb B,\tilde h) \geq 0, \ \ \text{on} \ \mathcal X,
\end{equation}
and
\begin{equation}
\omega_q \wedge  i\Theta( L,h^t) >0, \ \ \text{on} \ X, \ \text{for all} \ t\in \mathbb B.
\end{equation}
Then we have
\begin{equation}
\sum (\Theta_{j\bar k} u_j^t, u_k^t) \geq \sum (L_{\bar k}u_j^t, L_{\bar j}u_k^t).
\end{equation}
In particular, $\mathcal H^{n,q}$ is Griffiths-semipositive.
\end{theorem}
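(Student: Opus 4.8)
The plan is to follow the computation already carried out in the product case and simply track which pieces survive and remain nonnegative. First I would expand the metric $(u,v)$ on $\mathcal H^{n,q}$ in the base coordinates and differentiate twice, using the formula $(u,v)_{j\bar k}=(D_{t^j}u^t,D_{t^k}v^t)-(\Theta_{j\bar k}u^t,v^t)$ together with the harmonic representation $u^t=\mathbb H^t(\mathbf u)$ for a fixed $\dbar_X$-closed representative $\mathbf u$. Using $(u^t,v^t)=\int_X\{u^t,*v^t\}e^{-\phi^t}$, the first derivative splits into $(L_ju^t,v^t)+(u^t,L_{\bar j}v^t)$ via the commutation $L_j*=*L_j$, $L_{\bar j}*=*L_{\bar j}$ from \eqref{eq:L*}; here one uses that $v^t-\mathbf v$ is $\dbar_X$-exact and harmonic-orthogonal, so $(u^t,L_{\bar j}v^t)\equiv 0$. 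This gives $D_{t^j}u^t=\mathbb H^t(L_ju^t)$. Differentiating once more and using the commutator identity $L_{\bar k}L_j=L_jL_{\bar k}-\phi_{j\bar k}$ yields, for fixed $u_1,\dots,u_m\in H^{n,q}(L)$, the identity \eqref{eq:theta-product}, namely $\sum(\Theta_{j\bar k}u_j^t,u_k^t)=-\|a^t\|^2+\sum(\phi_{j\bar k}u_j^t,u_k^t)+\sum(L_{\bar k}u_j^t,L_{\bar j}u_k^t)$, where $a^t=\sum a_j^t$ with $a_j^t=D_{t^j}u^t-L_ju^t$.

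Next I would invoke the Key Lemma to the effect that $a_j^t$ is the $L^2$-minimal solution of $\dbar_X a_j^t=\dbar_X\phi_j\wedge u_j^t$, which reduces to checking $\dbar_X^*a_j^t=0$; this follows from $\dbar_X^*=-*(\partial_X-\partial_X\phi^t)*$, the commutation $*L_ju^t=L_j*u^t$, the harmonicity of $u^t$ (so $(\partial_X-\partial_X\phi^t)*u^t=0$), and the bracket identity $[\partial_X-\partial_X\phi^t,L_j]=0$. Then I would apply the Bochner-Kodaira-Nakano estimate: since the metric $h^t$ satisfies $\omega_q\wedge i\Theta(L,h^t)>0$ on $X$, one has $\|a^t\|^2\le([i\Theta(L,h^t),\Lambda_\omega]^{-1}c^t,c^t)=\sum(T_{j\bar k}u_j^t,u_k^t)$ with $c^t=\sum\dbar_X\phi_j\wedge u_j^t$ and $T_{j\bar k}$ the operator displayed above. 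Substituting this bound into \eqref{eq:theta-product} produces \eqref{eq:theta-product-1}: $\sum(\Theta_{j\bar k}u_j^t,u_k^t)=R+\sum((\phi_{j\bar k}-T_{j\bar k})u_j^t,u_k^t)+\sum(L_{\bar k}u_j^t,L_{\bar j}u_k^t)$ with $R\ge 0$.

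It remains to show the middle term is nonnegative, and this is where the hypothesis $\omega_q\wedge i\Theta(L\times\mathbb B,\tilde h)\ge 0$ on $\mathcal X$ enters. I would identify $\sum((\phi_{j\bar k}-T_{j\bar k})u_j^t,u_k^t)$ with a pushforward of the total-space curvature form paired against a suitable dual-representative $\mathbf u^*=\mathbf u_j^*\wedge(\partial/\partial t^j\lrcorner\,dt)$, where each $\mathbf u_j^*$ restricts to $*u_j^t$ on each fibre and satisfies the fibrewise orthogonality condition $i_t^*(\partial/\partial t^j\lrcorner(\omega_q\wedge\Theta(L\times\mathbb B,\tilde h)\mathbf u_j^*))\equiv 0$. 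Granting this identity (the second Key Lemma above), the integrand $i^{(m+n-1-q)^2}\omega_q\wedge\{i\Theta(L\times\mathbb B,\tilde h)\mathbf u^*,\mathbf u^*\}$ is a nonnegative $(m,m)$-form on $\mathcal X$ by the total-space semipositivity, so its pushforward is a nonnegative $(m,m)$-form on $\mathbb B$, i.e.\ the middle term is $\ge 0$. Combining the three nonnegative contributions gives $\sum(\Theta_{j\bar k}u_j^t,u_k^t)\ge\sum(L_{\bar k}u_j^t,L_{\bar j}u_k^t)\ge 0$, and taking all $u_j$ proportional to a single $u$ yields Griffiths-semipositivity. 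The main obstacle I anticipate is the last step: correctly constructing the dual-representative $\mathbf u_j^*$ satisfying both the restriction condition and the fibrewise orthogonality, and verifying that the pushforward computation genuinely produces the operator $\phi_{j\bar k}-T_{j\bar k}$ rather than something off by curvature terms of the fixed background metric $h$; this requires carefully disentangling the contributions of $\Theta(L,h)$ and $i\partial\dbar\phi$ inside $\Theta(L\times\mathbb B,\tilde h)$ and handling the mixed $dt\wedge d\bar z$ components of the total-space curvature.
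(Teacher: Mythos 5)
Your proposal is correct and follows essentially the same route as the paper: the curvature identity \eqref{eq:theta-product}, the $L^2$-minimality of $a_j^t$, the Bochner--Kodaira--Nakano bound $\|a^t\|^2\le\sum(T_{j\bar k}u_j^t,u_k^t)$ giving \eqref{eq:theta-product-1}, and the identification of $\sum((\phi_{j\bar k}-T_{j\bar k})u_j^t,u_k^t)$ as the pushforward of a fibrewise-nonnegative form via the dual-representative lemma. The one imprecision is the claim that $\sum(L_{\bar k}u_j^t,L_{\bar j}u_k^t)\ge 0$ for arbitrary $u_1,\dots,u_m$; this term is only manifestly nonnegative when $u_j=\xi_j u$, but that is exactly the case needed for Griffiths-semipositivity, so the conclusion stands.
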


We shall show in the next section that $\mathcal H^{n,q}$ can be seen as a holomorphic quotient bundle of a Nakano-semipositive bundle. But in general,  a holomorphic quotient  bundle of a Nakano-semipositive bundle is not Nakano-semipositive (see Page 340 in \cite{Demailly12} for a counterexample).

\subsection{Nakano-positivity of the bundle of $\dbar$-closed forms}

Let us denote by $\ker \dbar$ (resp. ${\rm Im} ~ \dbar$) the space of smooth $\dbar$-closed (resp. $\dbar$-exact) $L$-valued $(n,q)$-forms on $X$ respectively. Then we have the following trivial bundles:
\begin{equation}
\mathcal K:= \ker \dbar \times \mathbb B, \ \  \mathcal I:={\rm Im} ~ \dbar \times \mathbb B.
\end{equation}
But in general the metrics on $\mathcal K$ and $\mathcal I$ defined by $\{h^t\}_{t\in \mathbb B}$ are not trivial. By definition, we know that $\mathcal H^{n,q}$ is just the quotient bundle $\mathcal K/\mathcal I$. And the metric on $\mathcal H^{n,q}$ is just the quotient metric (see \cite{Wang16-0} for more results).

For every $u,v \in \ker\dbar$, we shall write
\begin{equation}
(u,v): t\mapsto \int_X \{ u, *v\}e^{-\phi^t}.
\end{equation}
Let us denote by $\Theta^{\mathcal K}_{j\bar k}$ the curvature operators on $\mathcal K$.  Fix $u_1,\cdots, u_m  \in \ker\dbar$. Since now
\begin{equation}
L_{\bar j} u_k\equiv 0, \ \ \forall \ 1\leq j,k \leq m,
\end{equation}
we know that the following theorem is true.

\begin{theorem} With the assumptions in Theorem \ref{th:2.3}, then we have
\begin{equation}
\sum (\Theta^{\mathcal K}_{j\bar k} u_j, u_k) = R+\sum\left( (\phi_{j\bar k}-T_{j\bar k}) u_j, u_k\right) \geq 0.
\end{equation}
In particular, $\mathcal K$ is Nakano-semipositive.
\end{theorem}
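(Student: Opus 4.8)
The plan is to rerun, essentially verbatim, the product-case computation carried out for $\mathcal H^{n,q}$ in the motivation section, but now applied directly to the holomorphically trivial bundle $\mathcal K=\ker\dbar\times\mathbb B$ rather than to the quotient $\mathcal H^{n,q}=\mathcal K/\mathcal I$. The one structural simplification is that a section of $\mathcal K$ attached to a fixed $u\in\ker\dbar$ is genuinely $t$-independent in the trivialization, so that $L_{\bar j}u\equiv 0$ for all $j$; this annihilates every term involving $L_{\bar j}$ in the curvature identity \eqref{eq:theta-product}, which is precisely why the inequality in Theorem \ref{th:2.3} becomes here the stated equality. Throughout we use the two assumptions of Theorem \ref{th:2.3}: $\omega_q\wedge i\Theta(L\times\mathbb B,\tilde h)\ge 0$ on $\mathcal X$, and $\omega_q\wedge i\Theta(L,h^t)>0$ on $X$ for every $t$.

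First I would pin down the Chern connection on $\mathcal K$. Since the complex structure is trivial, for fixed $u,v\in\ker\dbar$ one has $\dbar_{t^k}v=0$, hence $\partial_{t^j}(u,v)=(D_{t^j}u,v)$; differentiating $(u,v)(t)=\int_X\{u,*v\}e^{-\phi^t}$ under the integral sign and using $L_{\bar j}*=*L_{\bar j}$ together with $L_{\bar j}v\equiv 0$ gives $(u,v)_j=(L_ju,v)$ for every $v\in\ker\dbar$. Consequently $D_{t^j}u=L_ju+a_j$, where $a_j:=D_{t^j}u-L_ju$ is orthogonal to $\ker\dbar$; since $D_{t^j}u\in\ker\dbar$ we get $\dbar_Xa_j=-\dbar_XL_ju=\dbar_X\phi_j\wedge u$, and orthogonality to $\ker\dbar$ identifies $a_j$ with the $L^2$-minimal solution of this equation. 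This is the analogue of the lemma identifying $a_j$ with the minimal solution, with ``$u^t$ harmonic'' replaced by ``$D_{t^j}u$ lies in $\ker\dbar$''.

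Next I would compute $(u,v)_{j\bar k}=\partial_{\bar t^k}(L_ju,v)$. Because $u$ is constant, $L_ju=-\phi_ju$ and $L_{\bar k}L_ju=-\phi_{j\bar k}u$, so the product-case identity collapses to $(u,v)_{j\bar k}=-(\phi_{j\bar k}u,v)+(L_ju,L_kv)$. Substituting $D_{t^j}u=L_ju+a_j$ and using $(a_j,D_{t^k}v)=0$ (as $a_j\perp\ker\dbar\ni D_{t^k}v$), then summing over a tuple $u_1,\dots,u_m\in\ker\dbar$, yields
\begin{equation*}
\sum(\Theta^{\mathcal K}_{j\bar k}u_j,u_k)=-\|a^t\|^2+\sum(\phi_{j\bar k}u_j,u_k),\qquad a^t:=\sum_j a_j,\quad \dbar_Xa^t=\sum_j\dbar_X\phi_j\wedge u_j=:c^t,
\end{equation*}
where $a_j$ denotes the $L^2$-minimal solution attached to $u_j$.

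Finally, the positivity hypothesis $\omega_q\wedge i\Theta(L,h^t)>0$ (in the precise form \eqref{eq:motivation-positivity}) lets the Bochner--Kodaira--Nakano / H\"ormander estimate bound the minimal solution, $\|a^t\|^2\le\bigl([i\Theta(L,h^t),\Lambda_\omega]^{-1}c^t,c^t\bigr)=\sum(T_{j\bar k}u_j,u_k)$, so that $R:=\bigl([i\Theta(L,h^t),\Lambda_\omega]^{-1}c^t,c^t\bigr)-\|a^t\|^2\ge 0$ and $\sum(\Theta^{\mathcal K}_{j\bar k}u_j,u_k)=R+\sum\bigl((\phi_{j\bar k}-T_{j\bar k})u_j,u_k\bigr)$. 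The remaining term is nonnegative by the lemma that rewrites $\sum((\phi_{j\bar k}-T_{j\bar k})u_j,u_k)\,i^{m^2}dt\wedge\overline{dt}$ as $\pi_*$ of $i^{(m+n-1-q)^2}\omega_q\wedge\{i\Theta(L\times\mathbb B,\tilde h)\mathbf u^*,\mathbf u^*\}$, a semipositive form on $\mathcal X$ precisely because $\omega_q\wedge i\Theta(L\times\mathbb B,\tilde h)\ge 0$. Hence $\sum(\Theta^{\mathcal K}_{j\bar k}u_j,u_k)\ge 0$ for every tuple, i.e. $\mathcal K$ is Nakano-semipositive. I expect the only genuinely delicate point to be the bookkeeping for the Chern connection on the infinite-rank bundle $\mathcal K$ (so that ``Nakano-semipositive'' is understood in the Berndtsson sense, tested against finite tuples) and the clean identification of $a_j$ with the canonical $\dbar$-minimal solution; the positivity input is then imported wholesale from the proof of Theorem \ref{th:2.3}, so no new analytic estimate is needed.
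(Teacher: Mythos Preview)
Your proposal is correct and follows essentially the same approach as the paper. The paper's own argument is extremely terse: it simply observes that for constant sections $u_j\in\ker\dbar$ one has $L_{\bar j}u_k\equiv 0$, so the last term in \eqref{eq:theta-product-1} vanishes and the formula collapses to the stated equality, with $R\ge 0$ and $\sum((\phi_{j\bar k}-T_{j\bar k})u_j,u_k)\ge 0$ exactly as in the proof of Theorem~\ref{th:2.3}. You expand this out carefully, and in particular you correctly note that the identification of $a_j$ as the $L^2$-minimal solution now comes from $a_j\perp\ker\dbar$ (since $D_{t^j}u\in\ker\dbar$) rather than from the $\dbar^*$-closedness argument used in the harmonic case; this is the one place where the derivation of \eqref{eq:theta-product-1} genuinely needs to be rechecked for non-harmonic $u_j$, and your handling of it is sound.
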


\textbf{Remark:} One may also study the positivity properties of the bundle of $\dbar$-closed forms for non-trivial fibrations (see \cite{Wang16}). 

\section{Curvature formula}

We shall give a curvature formula for $\mathcal H^{p,q}$ in this section.

\subsection{Holomorphic vector bundle structure on $\mathcal H^{p,q}$} By a theorem of Kodaira-Spencer (see Page 349 in \cite{KSp}), we know that $\mathcal H^{p,q}$ has a smooth complex vector bundle structure if $\mathbf A$ is true. More precisely, $\mathbf A$ implies that for every $t_0\in B$ and every $u^{t_0}\in \mathcal H_{t_0}^{p,q}$ (see \eqref{eq:dbar-harmonic} for the definition of the $\dbar$-harmonic space $\mathcal H_{t_0}^{p,q}$), there is a smooth $E$-valued $(p,q)$-form, say $\mathbf u$, on $\mathcal X$ such that
\begin{equation}
\mathbf u|_{X_{t_0}}=u^{t_0},
\end{equation} 
and 
\begin{equation}
\mathbf u|_{X_{t}}\in \mathcal H_{t}^{p,q},
\end{equation} 
for every $t\in B$. Then the smooth vector bundle structure $\mathcal H^{p,q}$ can be defined as follows:

\begin{definition}\label{de:smooth} We call $u: t\mapsto u^t\in\mathcal H^{p,q}_t$ a smooth section of $\mathcal H^{p,q}$ if there exists a smooth $E$-valued $(p,q)$-form, say $\mathbf u$, on $\mathcal X$ such that $\mathbf u|_{X_{t}}=u^{t}$, $\forall \ t\in B$. And we call $\mathbf u$ a representative of $u$. We shall denote by $\Gamma(\mathcal H^{p,q})$ the space of smooth sections of $\mathcal H^{p,q}$.
\end{definition}

By using the Newlander-Nirenberg theorem, we shall prove that:

\begin{theorem}\label{th:holomorphic} Assume that $\mathcal H^{p,q}$ satisfies $\mathbf A$. Then $D^{0,1}:=\sum d\bar t^j \otimes \dbar_{t^j}$ defines a holomorphic vector bundle structure on $\mathcal H^{p,q}$, where each $\dbar_{t^j}$ is defined by
\begin{equation}
\dbar_{t^j} u:  t \to \mathbb H^t\left(i_t^* [\dbar, \delta_{\overline{V_j}}]  \mathbf{u}\right), \ \ [\dbar, \delta_{\overline{V_j}}]:=\dbar\delta_{\overline{V_j}}+ \delta_{\overline{V_j}}\dbar.
\end{equation}
Here $\textbf{u}$ is an arbitrary representative of $u\in\Gamma(\mathcal H^{p,q})$, $\mathbb H^t$ denotes the orthogonal projection to $\mathcal H^{p,q}_t$ and $V_j$ is an arbitrary smooth $(1,0)$-vector field on $\mathcal X$ such that $\pi_*V_j=\partial/\partial t^j$.
\end{theorem}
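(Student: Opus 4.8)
The plan is to verify two things in turn: first that $\dbar_{t^j}$, as defined, is a well-defined differential operator on sections of $\mathcal H^{p,q}$ (independent of the choices made), and second that the resulting $(0,1)$-connection $D^{0,1}=\sum d\bar t^j\otimes\dbar_{t^j}$ has vanishing $(0,2)$-curvature, so that the Newlander--Nirenberg-type integrability theorem for complex vector bundles yields a holomorphic structure whose $\dbar$-operator is $D^{0,1}$.

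\emph{Step 1: well-definedness.} I would first check that $\dbar_{t^j}u$ does not depend on the choice of representative $\mathbf u$ of $u$, nor on the choice of lift $V_j$ of $\partial/\partial t^j$. For the lift: if $V_j$ and $V_j'$ are two lifts then $W:=V_j-V_j'$ is a smooth $(1,0)$-vector field tangent to the fibres, so $i_t^*(\delta_{\overline W}\mathbf u)=0$ because $\overline W$ is tangent to the fibres while along a fibre $\mathbf u|_{X_t}$ is the pullback of a form on $X_t$; one then computes $i_t^*[\dbar,\delta_{\overline W}]\mathbf u$ and shows it is $\dbar^t$-exact, hence killed by $\mathbb H^t$. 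For the representative: if $\mathbf u,\mathbf u'$ both restrict to $u^t$ on every fibre, their difference $\mathbf w$ restricts to $0$ on each fibre, so $\mathbf w=\sum d\bar t^k\wedge \alpha_k + \sum dt^k\wedge\beta_k$ near any point (no pure-fibre part after restriction), and one checks $i_t^*[\dbar,\delta_{\overline{V_j}}]\mathbf w$ is again $\dbar^t$-exact. It is convenient here to choose $\mathbf u$ so that moreover $\mathbf u|_{X_t}\in\mathcal H^{p,q}_t$ for all $t$ — such a representative exists by the quoted Kodaira--Spencer result under $\mathbf A$ — in which case $\dbar_{t^j}u$ can be computed as $\mathbb H^t$ applied to a genuinely intrinsic quantity.

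\emph{Step 2: the Leibniz rule and integrability.} One checks that $\dbar_{t^j}(fu)=(\partial f/\partial\bar t^j)u+f\,\dbar_{t^j}u$ for smooth functions $f$ on $B$, which is immediate from $[\dbar,\delta_{\overline{V_j}}](\pi^*f\cdot\mathbf u)=\overline{\partial_j f}\,\mathbf u+\pi^*f\,[\dbar,\delta_{\overline{V_j}}]\mathbf u$ together with $\mathbb H^t(\mathbf u|_{X_t})=u^t$. Thus $D^{0,1}$ is an honest $(0,1)$-connection. Integrability amounts to showing $\dbar_{t^j}\dbar_{t^k}-\dbar_{t^k}\dbar_{t^j}=0$ as operators on $\Gamma(\mathcal H^{p,q})$. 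Choosing a representative $\mathbf u$ with $\mathbf u|_{X_t}$ harmonic for all $t$, and lifts $V_j$ so that $[\overline{V_j},\overline{V_k}]$ is tangent to the fibres (possible since $[\partial/\partial\bar t^j,\partial/\partial\bar t^k]=0$ downstairs), the key identity is the graded Jacobi/Cartan relation $[\dbar,[\delta_{\overline{V_j}},[\dbar,\delta_{\overline{V_k}}]]]$ manipulated via $\dbar^2=0$ and $[\delta_V,\delta_W]=0$ into a sum of a $\dbar^t$-exact term (killed by $\mathbb H^t$) and a term involving $\delta_{\overline{[V_j,V_k]}}$ which is tangent to the fibres and so restricts to a $\dbar^t$-exact form as in Step 1. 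A small subtlety is the interaction between $\mathbb H^t$ and $i_t^*[\dbar,\delta_{\overline{V_j}}]$: one must verify that applying $\dbar_{t^k}$ to the section $\dbar_{t^j}u$ — whose representatives are not the naive $[\dbar,\delta_{\overline{V_j}}]\mathbf u$ but harmonic-valued corrections of it — produces the same answer; this is handled because the correction is fibrewise $\dbar^t$-exact and $[\dbar,\delta_{\overline{V_k}}]$ of a $\dbar$-exact-on-fibres form is again exact on fibres.

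\emph{Main obstacle.} The routine parts are the Leibniz rule and the "tangent-to-fibre contractions restrict to exact forms" lemmas. The genuine work is the integrability computation in Step 2: keeping careful track of which intermediate expressions are $\dbar^t$-exact (hence invisible after $\mathbb H^t$) versus which genuinely contribute, and in particular justifying that one may commute $\mathbb H^t$ past the second application of $[\dbar,\delta_{\overline{V}}]$. I expect this to hinge on a good choice of representatives and lifts (harmonic-valued $\mathbf u$, and $V_j$ chosen adapted to a local product structure or at least so the relevant brackets are vertical), after which $\dbar^2=0$ and the vanishing of the horizontal obstruction $[\partial_{\bar t^j},\partial_{\bar t^k}]$ on the base collapse the commutator to zero.
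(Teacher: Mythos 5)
Your proposal follows essentially the same route as the paper: well-definedness by showing the ambiguous terms restrict to $\dbar^t$-exact forms killed by $\mathbb H^t$, the Leibniz rule, and Newlander--Nirenberg integrability reduced via $[L_{\overline{V_j}},L_{\overline{V_k}}]=L_{[\overline{V_j},\overline{V_k}]}$ to a contraction of $\dbar^t u^t=0$ with the (automatically vertical) field $[\overline{V_j},\overline{V_k}]$; the paper realizes your ``harmonic-valued correction'' concretely as $\mathbf u_j-\dbar\mathbf v_j$ with $i_t^*\mathbf v_j=(\dbar^t)^*G^t(i_t^*\mathbf u_j)$, using the Kodaira--Spencer smoothness of the Green operator to make it a legitimate smooth representative. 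One small slip: for vertical $W$ the restriction $i_t^*(\delta_{\overline W}\mathbf u)=\delta_{\overline W|_{X_t}}u^t$ need not vanish when $q\geq 1$, but this is harmless since the resulting term is still $\dbar^t$-exact, which is all your argument actually uses.
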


\begin{proof} First, let us show that $D^{0,1}$ is well defined. Since each $u^t\in \mathcal H^{p,q}_t$ is harmonic, we know that $i_t^*\dbar \mathbf u\equiv 0$, thus the definition of $\dbar_{t^j} u$ does not depend on the choice of $V_j$. Thus it suffices to check that  $\dbar_{t^j} u$ does not depend on the choice of $\mathbf u$. Let $\mathbf u'$ be another representative of $u$ then we have
\begin{equation*}
\mathbf u- \mathbf{u}'=\sum dt^j\wedge a_j +\sum d\bar t^k \wedge b_k.
\end{equation*}
Thus
\begin{equation}
i_t^*\delta_{\overline{V_j}}\dbar (\mathbf u- \mathbf{u}') =-i_t^* \dbar b_j,
\end{equation}
which implies that
\begin{equation}
\mathbb H^t\left(i_t^* [\dbar, \delta_{\overline{V_j}}]  (\mathbf{u}-\mathbf{u'})\right)=0.
\end{equation}
Thus $D^{0,1}$ is well defined. It is easy to check that
\begin{equation}
\dbar_{t^j} (fu)=f \dbar_{t^j} u +f_{\bar j} u,
\end{equation}
where $f$  is an arbitrary smooth function on $B$. By Newlander-Nirenberg's theorem, it suffices to show that $D^{0,1}$ is integrable, i.e. $(D^{0,1})^2 =0$. By definition, it is sufficient to show
\begin{equation}
\dbar_{t^j} \dbar_{t^k} u=\dbar_{t^k}\dbar_{t^j}u,
\end{equation}
for every $u\in\Gamma(\mathcal H^{p,q})$ and every $1\leq j,k \leq m$. 

Notice that 
\begin{equation}
\mathbf u_j:= [\dbar, \delta_{\overline{V_j}}]  \mathbf{u},
\end{equation}
satisfies (see Lemma \ref{le:re-u-1}, since $ i_t^*\dbar \mathbf u\equiv 0$)
\begin{equation}
\dbar^t i_t^*\mathbf u_j =  i_t^* \dbar \mathbf u_j =  i_t^* [\dbar, \delta_{\overline{V_j}}]  \dbar \mathbf u=0.
\end{equation}
Thus each $i_t^*\mathbf u_j$ has the following orthogonal decomposition
\begin{equation}
i_t^*\mathbf u_j=(\dbar_{t^j}u)(t)+\dbar^t(\dbar^t)^*G^t( i_t^*\mathbf u_j),
\end{equation}
where $G^t$ is the Green operator on $X_t$. Using Kodaira-Spencer's theorem again, we know that $G^t( i_t^*\mathbf u_j )$ depends smoothly on $t$. Thus $(\dbar^t)^*G^t( i_t^*\mathbf u_j )$ depends smoothly on $t$. For each $j$, let us choose a smooth form $\mathbf v_j$ on $\mathcal X$  such that 
\begin{equation*}
i_t^*\mathbf v_j=(\dbar^t)^*G^t( i_t^*\mathbf u_j ).
\end{equation*} 
By definition, we know that each
\begin{equation}
\mathbf u_j-\dbar \mathbf v_j,
\end{equation}
is a representative of $\dbar_{t^j}u$.  Thus we have
\begin{eqnarray}
\left(\dbar_{t^j} \dbar_{t^k} -\dbar_{t^k}\dbar_{t^j}\right)u &=& \mathbb H^t\left(i_t^* [\dbar, \delta_{\overline{V_j}}] (\mathbf u_k-\dbar \mathbf v_k)-  i_t^* [\dbar, \delta_{\overline{V_k}}] (\mathbf u_j-\dbar \mathbf v_j) \right)\\
&=& \mathbb H^t i_t^* \left( [\dbar, \delta_{\overline{V_j}}]   [\dbar, \delta_{\overline{V_k}}] -[\dbar, \delta_{\overline{V_k}}]   [\dbar, \delta_{\overline{V_j}}]  \right) \mathbf u.
\end{eqnarray}
Notice that
\begin{equation}
[L_{\overline{V_j}}, L_{\overline{V_k}}] =L_{[\overline{V_j}, \overline{V_k}]},
\end{equation}
implies that
\begin{equation}
[\dbar, \delta_{\overline{V_j}}]   [\dbar, \delta_{\overline{V_k}}] -[\dbar, \delta_{\overline{V_k}}]   [\dbar, \delta_{\overline{V_j}}]  =[\dbar,  \delta_{[\overline{V_j}, \overline{V_k}]} ]
\end{equation}
Since each $u^t$ is harmonic (thus $\dbar^t$-closed), we have
\begin{equation}
\left(\dbar_{t^j} \dbar_{t^k} -\dbar_{t^k}\dbar_{t^j}\right)u  = \mathbb H^t i_t^* [\dbar,  \delta_{[\overline{V_j}, \overline{V_k}]} ] \mathbf u= \mathbb H^t i_t^* \delta_{[\overline{V_j}, \overline{V_k}]} \dbar \mathbf u=   \mathbb H^t \delta_{[\overline{V_j}, \overline{V_k}]|_{X_t}} \dbar^t u^t=0. 
\end{equation}
The proof is complete.
\end{proof}

\subsection{Chern connection on $\mathcal H^{p,q}$} In this subsection, we shall define the Chern connection on $\mathcal H^{p,q}$. 

Assume that $\mathcal X$ possesses a K\"ahler form, say $\omega$. Then each fibre $X_t$ possesses a K\"ahler form $\omega^t:=\omega|_{X_t}$.  Recall that a smooth $k$-form $\alpha$ on $X_t$ is said to be primitive with respect to $\omega^t$ if $k\leq n$ and $\omega^t_{n-k+1} \wedge \alpha =0$ on $X_t$. Let $u: t\mapsto u^t\in\mathcal H^{p,q}_t$ be a smooth section of $\mathcal H^{p,q}$.
By the Lefschetz decomposition theorem, each $u^t$ has a unique decomposition of the form
\begin{equation}
u^t:= \sum_r \omega^t_{r} \wedge u^t_r,
\end{equation}
where each $u^t_r$ is a smooth $E_t$-valued primitive $(p-r, q-r)$-form. Let us denote by $*$ the Hodge-Poincar\'e-de Rham star operator with respect to $\omega^t$. Then we have
\begin{equation}\label{eq:star-lefschetz}
*u^t:= \sum_r C_r ~\omega^t_{n+r-p-q} \wedge u^t_r,  \ \ \ C_r =i^{(p+q-2r)^2}(-1)^{p-r}.
\end{equation}
Since $u^t$ depends smoothly on $t$, we know that each $u^t_r$ also depends smoothly on $t$. Thus for each $r$, there exists a smooth $E$-valued $(p-r, q-r)$-form, say $\mathbf {u_r}$ on $\mathcal X$ such that 
\begin{equation}
\mathbf u_r |_{X_t}=u^t_r.
\end{equation}
By Definition \ref{de:smooth}, we know that
\begin{equation}
\sum_r \omega_{r} \wedge \mathbf u_r
\end{equation}
is a representative of $u$. We shall use the following definition:

\begin{definition}\label{de:dual-repre}  We call a smooth $E$-valued $(n-q,n-p)$-form $\mathbf u^*$ on $\mathcal X$ a dual-representative of $u \in\Gamma(\mathcal H^{p,q})$ if 
\begin{equation}
\mathbf u^* =  \sum_r C_r ~\omega_{n+r-p-q} \wedge\mathbf  u_r.
\end{equation}
\end{definition}

By definition, we know that if $\mathbf u$ is a representative of $u\in\Gamma(\mathcal H^{p,q})$ and $\mathbf v^*$ is a  dual-representative of $v\in\Gamma(\mathcal H^{p,q})$ then
\begin{equation}
(u, v)=\pi_*\{\mathbf u,\mathbf v^*\},
\end{equation}
where $\{\cdot, \cdot\}$ is the canonical sesquilinear pairing (see page 268 in \cite{Demailly12}).  Now we have (see page 12 in \cite{Wang15})
\begin{eqnarray}
\frac{\partial}{\partial t^j} (u,v) &=&\frac{\partial}{\partial t^j} \pi_*\{\mathbf u,\mathbf v^*\} \\
&=&\pi_*( L_{V_j}\{\mathbf u, \mathbf v^*\}) \\
\label{eq:try} &=& \pi_*( \{L_j \mathbf u, \mathbf v^*\} + \{ \mathbf u, L_{\bar j}\mathbf v^*\}),
\end{eqnarray}
where $V_j$ is an arbitrary smooth $(1,0)$-vector field on $\mathcal X$ such that $\pi_*V_j=\partial/\partial t^j$ and 
\begin{equation}
L_j:=d^E\delta_{V_j}+\delta_{V_j} d^E, \ \ L_{\bar j} :=d^E \delta_{\overline{V_j}} +\delta_{\overline{V_j}} d^E.
\end{equation}
Here $d^E=\partial^E +\dbar$ denotes the Chern connection on $E$. In order to find a good expression of the Chern connection on $\mathcal H^{p,q}$, we shall introduce the following definition: 

\begin{definition}\label{de:horizontal-lift} Assume that $\mathcal X$ possesses a K\"ahler form $\omega$. A smooth $(1,0)$-vector field $V$  on $\mathcal X$ is said to be horizontal with respect to $\omega$ if 
\begin{equation}
i_t^*( \delta_{\overline V} \omega)=0,
\end{equation}
on $X_t$ for every $t\in B$. Moreover, for each $1\leq j\leq m$, we call $V_j$ the horizontal lift vector field of $\partial/\partial t^j$ with respect to $\omega$ if $V_j$ is horizontal with respect to $\omega$ and satisfies 
\begin{equation}
\pi_*(V_j)=\partial/\partial t^j.
\end{equation}
\end{definition}

Now we can prove that:

\begin{proposition}\label{pr:10-chern}  Assume that $\mathcal X$ possesses a K\"ahler form $\omega$ and each $V_j$ is the horizontal lift vector field of $\partial/\partial t^j$. Then 
\begin{equation}\label{eq:10-chern}
\pi_*\{ \mathbf u, L_{\bar j}\mathbf v^*\} =(u, \dbar_{t^j} v),
\end{equation}
for every smooth sections $u, \ v$ of $\mathcal H^{p,q}$.
\end{proposition}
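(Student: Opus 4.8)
The plan is to collapse the identity to a pointwise statement on a single fibre $X_t$ and to recognise that statement as the compatibility of the fibrewise $\dbar$-connection with the Hodge star.

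\emph{Reduction to a fibrewise identity.} Since $(\dbar_{t^j}v)(t)=\mathbb H^t\bigl(i_t^*[\dbar,\delta_{\overline{V_j}}]\mathbf v\bigr)$ lies in $\mathcal H^{p,q}_t$ and $u^t$ is harmonic, self-adjointness of $\mathbb H^t$ gives $(u,\dbar_{t^j}v)(t)=\bigl(u^t,i_t^*[\dbar,\delta_{\overline{V_j}}]\mathbf v\bigr)=\int_{X_t}\bigl\{u^t,*^t\bigl(i_t^*[\dbar,\delta_{\overline{V_j}}]\mathbf v\bigr)\bigr\}$. For the left side, $\{\mathbf u,L_{\bar j}\mathbf v^*\}$ has total degree $2n$, and its component of fibre-bidegree $(n-1,n+1)$ vanishes under $i_t^*$, so $\pi_*\{\mathbf u,L_{\bar j}\mathbf v^*\}(t)=\int_{X_t}i_t^*\{\mathbf u,L_{\bar j}\mathbf v^*\}=\int_{X_t}\{u^t,i_t^*(L_{\bar j}\mathbf v^*)\}$; writing $L_{\bar j}=[\dbar,\delta_{\overline{V_j}}]+[\partial^E,\delta_{\overline{V_j}}]$, the second summand raises the holomorphic bidegree and thus pairs to zero against the $(p,q)$-form $u^t$. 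Hence it suffices to show, for all harmonic $u^t$,
\begin{equation*}
\mathbb H^t\bigl(i_t^*[\dbar,\delta_{\overline{V_j}}]\mathbf v^*\bigr)=*^t\,\mathbb H^t\bigl(i_t^*[\dbar,\delta_{\overline{V_j}}]\mathbf v\bigr).
\end{equation*}

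\emph{Using horizontality.} Take $\mathbf v=\sum_r\omega_r\wedge\mathbf v_r$ with $\mathbf v_r|_{X_t}=v^t_r$ the primitive Lefschetz components of $v^t$, so that $\mathbf v^*=\sum_r C_r\,\omega_{n+r-p-q}\wedge\mathbf v_r$ is a dual-representative (Definition~\ref{de:dual-repre}). Because $\omega$ is closed and $V_j$ horizontal, $i_t^*\bigl([\dbar,\delta_{\overline{V_j}}]\omega_k\bigr)=\omega^t_{k-1}\wedge\dbar^t\,i_t^*(\delta_{\overline{V_j}}\omega)=0$, and since $[\dbar,\delta_{\overline{V_j}}]$ is a derivation commuting with $\dbar$,
\begin{equation*}
i_t^*[\dbar,\delta_{\overline{V_j}}]\mathbf v=\sum_r\omega^t_r\wedge\gamma_r,\qquad i_t^*[\dbar,\delta_{\overline{V_j}}]\mathbf v^*=\sum_r C_r\,\omega^t_{n+r-p-q}\wedge\gamma_r,\qquad \gamma_r:=i_t^*[\dbar,\delta_{\overline{V_j}}]\mathbf v_r .
\end{equation*}
Also $i_t^*\dbar\mathbf v=\dbar^t v^t=0$, and because $\Delta_{\dbar^t}=\Delta_{\partial^{E_t}}$ on the Kähler fibre, $i_t^*\mathbf v^*=*^t v^t$ is harmonic, so $\mathbf v^*$ represents $*v\in\Gamma(\mathcal H^{n-q,n-p})$ and the claim is precisely that the fibrewise $\dbar$-connection commutes with $*^t$. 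Unwinding the derivation along the horizontal lift, $i_t^*[\dbar,\delta_{\overline{V_j}}]\mathbf v=\partial_{\bar t^j}v^t+\dbar^t(\cdots)$ and likewise $i_t^*[\dbar,\delta_{\overline{V_j}}]\mathbf v^*=\partial_{\bar t^j}(*^t v^t)+\dbar^t(\cdots)$, where $\partial_{\bar t^j}$ differentiates the relevant family of fibre forms in the horizontal frame.

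\emph{The crux.} The $\dbar^t$-exact tails above are killed by $\mathbb H^t$, and their $*^t$-images are $(\partial^{E_t})^*$-exact (as $*^t$ intertwines $\dbar^t$ and $(\partial^{E_t})^*$ up to sign on the Kähler fibre), hence also killed by $\mathbb H^t$; so the identity reduces to
\begin{equation*}
\int_{X_t}\bigl\{u^t,\,(\partial_{\bar t^j}*^t)\,v^t\bigr\}=0\qquad(u^t,v^t\in\mathcal H^{p,q}_t),
\end{equation*}
i.e. the variation of the fibrewise Hodge star must carry harmonic forms into the orthogonal complement of $\mathcal H^{p,q}_t$. To see this one computes $\partial_{\bar t^j}*^t$ as a first-order expression in $\partial_{\bar t^j}\omega^t$, uses that $\partial_{\bar t^j}\omega^t$ is cohomologically trivial on $X_t$ (the class $i_t^*[\omega]$ being independent of $t$), hence $\partial^t\dbar^t$-exact by the $\partial^t\dbar^t$-lemma on the compact Kähler fibre, and then integrates by parts, transferring the fibre differentials onto $u^t$ or $v^t$ where they die because these forms are $\dbar^t$- and $(\dbar^t)^*$-closed and $\omega^t\wedge{}$, $\Lambda_{\omega^t}$ preserve fibre-harmonicity. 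This control of $\partial_{\bar t^j}*^t$ against harmonic forms is the one genuine difficulty; everything preceding it is bookkeeping with bidegrees, the derivation property of $[\dbar,\delta_{\overline{V_j}}]$, and the horizontality of $V_j$. When $p=n$ every $\gamma_r$ is automatically a primitive $(n-q,0)$-form, so $i_t^*[\dbar,\delta_{\overline{V_j}}]\mathbf v^*=*^t\bigl(i_t^*[\dbar,\delta_{\overline{V_j}}]\mathbf v\bigr)$ holds exactly and the proposition is immediate — this is the case used for Theorem~\ref{th:main}.
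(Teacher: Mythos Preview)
Your reduction to the fibrewise identity and your use of horizontality to obtain
\[
i_t^*[\dbar,\delta_{\overline{V_j}}]\mathbf v=\sum_r\omega^t_r\wedge\gamma_r,\qquad
i_t^*[\dbar,\delta_{\overline{V_j}}]\mathbf v^*=\sum_r C_r\,\omega^t_{n+r-p-q}\wedge\gamma_r
\]
are exactly what the paper does, and your observation that the case $p=n$ is then immediate is correct. The gap is in the general $(p,q)$ case: you overlook that the forms $\gamma_r$ are themselves \emph{primitive}, which is precisely what the paper's one-line reduction to $i_t^*[[\dbar,\delta_{\overline{V_j}}],\omega]=0$ is encoding. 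Indeed, since $v^t_r$ is primitive for every $t$, the form $\omega_N\wedge\mathbf v_r$ (with $N=n-p-q+2r+1$) restricts to zero on every fibre; by the same computation as in Lemma~\ref{le:re-u-1} one gets $i_t^*[\dbar,\delta_{\overline{V_j}}](\omega_N\wedge\mathbf v_r)=0$, and then the commutation with $\omega$ yields $\omega^t_N\wedge\gamma_r=0$. Hence $\sum_r\omega^t_r\wedge\gamma_r$ \emph{is} the Lefschetz decomposition, the Hodge-star formula~\eqref{eq:star-lefschetz} applies verbatim, and one gets
\[
*^t\bigl(i_t^*[\dbar,\delta_{\overline{V_j}}]\mathbf v\bigr)=i_t^*[\dbar,\delta_{\overline{V_j}}]\mathbf v^*
\]
as an exact equality of forms, with no harmonic projection and no further analysis needed.

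Because you miss this, you are forced into the detour through $(\partial_{\bar t^j}*^t)$, and that ``crux'' step is not a proof as written. The assertion that the variation of the fibrewise Hodge star sends harmonic forms into $(\mathcal H^{p,q}_t)^\perp$ is true (it must be, since the proposition holds), but your justification---write $\partial_{\bar t^j}\omega^t$ as $\partial^t\dbar^t$ of something and integrate by parts---does not go through in any evident way: the operator $\partial_{\bar t^j}*^t$ is zeroth order and algebraic in the metric variation, and expressing it via a potential for $\partial_{\bar t^j}\omega^t$ does not produce differential operators acting on $u^t$ or $v^t$ that one can kill by harmonicity. Nor does the claim that $L_{\omega^t}$ and $\Lambda_{\omega^t}$ preserve harmonicity bridge the gap, since the obstruction sits exactly in the non-primitive components of the $\gamma_r$, and those are what you have not controlled. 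In short: the missing idea is primitivity of $\gamma_r$, and once you have it the ``crux'' evaporates.
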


\begin{proof} For bidegree reason, we have
 \begin{equation}
\pi_* \{ \mathbf u, L_{\bar j}\mathbf v^*\} =\pi_* \{ \mathbf u, [\dbar, \delta_{\overline{V_j}}]\mathbf v^*\},
\end{equation}
Thus it suffices to show that
 \begin{equation}\label{eq:10-chern-2}
(\dbar_{t^j} v)(t)= \mathbb H^t \left( (-1)^{p+q} *i_t^*[ \dbar, \delta_{\overline{V_j}}] \mathbf v^* \right).
\end{equation}
By Theorem \ref{th:holomorphic} and Definition \ref{de:dual-repre}, it suffices to check that
\begin{equation}
i_t^*[[\dbar, \delta_{\overline{V_j}}], \omega]=0.
\end{equation}
Since each $V_j$ is horizontal, the above equality is always true. The proof is complete.
\end{proof}

By Proposition \ref{pr:10-chern}, we have
\begin{equation}
\frac{\partial}{\partial t^j} (u,v)= \pi_* \{L_j \mathbf u, \mathbf v^*\}  +(u, \dbar_{t^j} v).
\end{equation}
By definition, the $(1,0)$-part of the Chern connection $D^{1,0}=\sum dt^j \otimes D_{t^j}$ should satisfy
\begin{equation}
\frac{\partial}{\partial t^j} (u,v)= (D_{t^j}u,v)  +(u, \dbar_{t^j} v).
\end{equation}
Thus we have:

\begin{proposition}\label{pr:chern} Assume that $\mathcal X$ possesses a K\"ahler form $\omega$ and each $V_j$ is the horizontal lift vector field of $\partial/\partial t^j$. Then the $(1,0)$-part of the Chern connection on $\mathcal H^{p,q}$ satisfies 
\begin{equation}\label{eq:uvjk3}
D_{t^j}u: t \mapsto \mathbb H^t\left(   i_t^*[\partial^E, \delta_{V_j}] \mathbf{u}\right),
\end{equation}
where $\mathbf u$ is an arbitrary representative of $u\in\Gamma(\mathcal H^{p,q})$.
\end{proposition}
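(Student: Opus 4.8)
The plan is to read off the $(1,0)$-part of the Chern connection from the first-variation identity
\[
\frac{\partial}{\partial t^j}(u,v)=\pi_*\{L_j\mathbf u,\mathbf v^*\}+(u,\dbar_{t^j}v),
\]
which is obtained by feeding Proposition \ref{pr:10-chern} into \eqref{eq:try} and is exactly where the horizontality of $V_j$ enters; here $\mathbf u$ is any representative of $u$ and $\mathbf v^*$ any dual-representative of $v$. First I would split $L_j=[d^E,\delta_{V_j}]=[\partial^E,\delta_{V_j}]+[\dbar,\delta_{V_j}]$. Since $V_j$ is a $(1,0)$-vector field, $\delta_{V_j}$ lowers the holomorphic degree by one, so $[\partial^E,\delta_{V_j}]$ preserves bidegree while $[\dbar,\delta_{V_j}]$ sends $(p,q)$-forms to $(p-1,q+1)$-forms. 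As $\mathbf v^*$ has bidegree $(n-q,n-p)$, the pairing $\{[\partial^E,\delta_{V_j}]\mathbf u,\mathbf v^*\}$ has bidegree $(n,n)$ whereas $\{[\dbar,\delta_{V_j}]\mathbf u,\mathbf v^*\}$ has bidegree $(n-1,n+1)$, so the fibre integral $\pi_*$ annihilates the latter for degree reasons. Hence $\pi_*\{L_j\mathbf u,\mathbf v^*\}=\pi_*\{[\partial^E,\delta_{V_j}]\mathbf u,\mathbf v^*\}$.

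Next I would evaluate this fibrewise. By the definition of the push-forward and since $i_t^*\mathbf v^*=*v^t$ (see Definition \ref{de:dual-repre}), at a point $t\in B$ one has
\[
\left(\pi_*\{[\partial^E,\delta_{V_j}]\mathbf u,\mathbf v^*\}\right)(t)=\int_{X_t}\bigl\{\,i_t^*[\partial^E,\delta_{V_j}]\mathbf u,\ *v^t\,\bigr\},
\]
which is the $L^2$-inner product on $X_t$ of $i_t^*[\partial^E,\delta_{V_j}]\mathbf u$ with $v^t$. Writing $i_t^*[\partial^E,\delta_{V_j}]\mathbf u=\mathbb H^t\!\left(i_t^*[\partial^E,\delta_{V_j}]\mathbf u\right)+w$ with $w$ orthogonal to $\mathcal H^{p,q}_t$, and using that $v^t\in\mathcal H^{p,q}_t$, the $w$-contribution drops out, so this integral equals $\bigl(\mathbb H^t(i_t^*[\partial^E,\delta_{V_j}]\mathbf u),\,v^t\bigr)$, the inner product of $\mathcal H^{p,q}$.

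Combining the two steps gives $\dfrac{\partial}{\partial t^j}(u,v)=\bigl(\mathbb H^t(i_t^*[\partial^E,\delta_{V_j}]\mathbf u),\,v^t\bigr)+(u,\dbar_{t^j}v)$ for all smooth sections $u,v$. Since $\mathcal H^{p,q}$ is a Hermitian holomorphic vector bundle by Theorem \ref{th:holomorphic}, with $D^{0,1}=\sum d\bar t^j\otimes\dbar_{t^j}$ the $(0,1)$-part of its Chern connection, the $(1,0)$-part $D^{1,0}=\sum dt^j\otimes D_{t^j}$ is the unique one satisfying $\frac{\partial}{\partial t^j}(u,v)=(D_{t^j}u,v)+(u,\dbar_{t^j}v)$; comparing and using non-degeneracy of the fibre inner product forces $D_{t^j}u=\mathbb H^t(i_t^*[\partial^E,\delta_{V_j}]\mathbf u)$. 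This identity simultaneously shows the right-hand side is independent of the chosen representative $\mathbf u$ and depends smoothly on $t$, as it coincides with the Chern derivative $D_{t^j}u$.

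I expect the only point that genuinely needs care is the bidegree cancellation, namely checking that only the $[\partial^E,\delta_{V_j}]$-component of $L_j$ survives under $\pi_*\{\,\cdot\,,\mathbf v^*\}$, and keeping explicit that the horizontality of $V_j$ is precisely the hypothesis that makes the $\dbar$-term of the first variation equal $(u,\dbar_{t^j}v)$ on the nose (via Proposition \ref{pr:10-chern}); the rest is the formal Hodge-theoretic orthogonal projection already packaged in the preceding statements.
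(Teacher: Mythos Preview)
Your proof is correct and follows essentially the same route as the paper: combine \eqref{eq:try} with Proposition~\ref{pr:10-chern} to obtain $\frac{\partial}{\partial t^j}(u,v)=\pi_*\{L_j\mathbf u,\mathbf v^*\}+(u,\dbar_{t^j}v)$, then compare with the defining relation of the Chern connection. You have simply made explicit the two points the paper leaves implicit, namely the bidegree cancellation that kills the $[\dbar,\delta_{V_j}]$-component under $\pi_*\{\,\cdot\,,\mathbf v^*\}$ and the passage from the fibre integral to the harmonic projection via $i_t^*\mathbf v^*=*v^t$.
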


\subsection{Curvature of $\mathcal H^{p,q}$} In this section, we shall assume that $\mathcal X$ possesses a K\"ahler form, say $\omega$, and $\mathcal H^{p,q}$ satisfies $\mathbf A$. For each $1\leq j\leq m$, we shall denote by $V_j$ the horizontal lift vector field of $\partial/\partial t^j$ with respect to $\omega$. 

Let $u,v$ be two holomorphic sections of $\mathcal H^{p,q}$. By Proposition \ref{pr:10-chern} and \eqref{eq:try}, we have
\begin{equation}
   \frac{\partial}{\partial t^j} (u,v)=\pi_* \{L_j \mathbf u, \mathbf v^*\}.
\end{equation}
Thus we have
\begin{eqnarray}
(u,v)_{j\bar k} &=& \frac{\partial}{\partial \bar t^k} \pi_* \{L_j \mathbf u, \mathbf v^*\}=\pi_* L_{\overline{V_k}}\{L_j \mathbf u, \mathbf v^*\}\\
&=& \pi_*( \{L_{\bar k}L_j \mathbf u, \mathbf v^*\} +\{L_j \mathbf u,L_k \mathbf v^*\}) \\
\label{eq:no-2}&=& \pi_*( \{[L_{\bar k},L_j] \mathbf u, \mathbf v^*\} + \{L_j L_{\bar k} \mathbf u, \mathbf v^*\}  + \{L_j \mathbf u,L_k \mathbf v^*\}) .
\end{eqnarray}
Since $u$ is a holomorphic section, by Theorem \ref{th:holomorphic}, for bidegree reason, we have
\begin{equation}\label{eq:re-u}
   \pi_*  \{L_{\bar k} \mathbf u, \mathbf v^*\} =0.
\end{equation}
Thus 
\begin{equation}\label{eq:no-1}
  0= \frac{\partial}{\partial t^j} \pi_*  \{L_{\bar k}\mathbf u, \mathbf v^*\}  = \pi_*( \{L_j L_{\bar k} \mathbf u, \mathbf v^*\} + \{L_{\bar k}\mathbf u, L_{\bar j}  \mathbf v^* \}) .
\end{equation}
By \eqref{eq:no-2} , we have
\begin{equation}\label{eq:uvjk}
 (u,v)_{j\bar k} = \pi_* \left( \{[L_{\bar k},L_j] \mathbf u, \mathbf v^*\} - \{L_{\bar k}\mathbf u, L_{\bar j}  \mathbf v^* \}   + \{L_j \mathbf u,L_k \mathbf v^*\}\right) . 
\end{equation}
For the last term, since each $V_j$ is horizontal, by \eqref{eq:star-lefschetz}, we have
\begin{eqnarray*}
\pi_*  \{L_j \mathbf u,L_k \mathbf v^*\} &= &\pi_*\left(  \{[\partial^E,\delta_{V_j}]\mathbf u,[\partial^E,\delta_{V_k}]\mathbf v^*\} +\{[\dbar,\delta_{V_j}]\mathbf u,[\dbar,\delta_{V_k}]\mathbf v^*\}\right) \\
&= & \left(i_t^*[\partial^E,\delta_{V_j}]\mathbf u, i_t^*[\partial^E,\delta_{V_k}]\mathbf v \right) - \left(\dbar V_j|_{X_t} \lrcorner~  u^t, \dbar V_k|_{X_t} \lrcorner ~ v^t\right).
\end{eqnarray*}
By the same reason,
we have
\begin{equation}
\pi_*  \{L_{\bar k} \mathbf u,L_{\bar j} \mathbf v^*\}= \left(i_t^*[\dbar ,\delta_{\overline{V_k}}]\mathbf u, i_t^*[\dbar,\delta_{\overline{V_j}}]\mathbf v \right) - \left(\partial \overline{V_k}|_{X_t} \lrcorner~  u^t, \partial \overline{V_j}|_{X_t} \lrcorner ~ v^t\right)
\end{equation}
By definition of the Chern connection, we have
\begin{equation}
    (\Theta_{j\bar k} u,v)=(D_{t^j}u ,D_{t^k} v)-(u,v)_{j\bar k}.
\end{equation}
Put 
\begin{equation}\label{eq:ab}
a_j^u= D_{t^j} u - i_t^* [\partial^E, \delta_{V_j}]\mathbf u ;  \  \ \ b_j^u=\dbar V_j|_{X_t} \lrcorner~  u^t,
\end{equation}
and
\begin{equation}
a_{\bar j}^v:=  i_t^*[\dbar, \delta_{\overline{V_j}}] \mathbf v, \ \ b_{\bar j}^v:= \partial \overline{V_j}|_{X_t}  \lrcorner~  v^t. 
\end{equation}
Then we have:

\begin{theorem}\label{th:theta_jk} Assume that $\mathcal X$ possesses a K\"ahler form $\omega$ and $\mathcal H^{p,q}$ satisfies $\mathbf A$. Let $u$ and $v$ be holomorphic sections of $\mathcal H^{p,q}$. Then we have
\begin{equation}\label{eq:theta_jk}
(\Theta_{j\bar k}u, v) =(b_j^u, b_k^v)-(a_j^u, a_k^v)+ \pi_* \{[L_j, L_{\bar k}] \mathbf u, \mathbf v^*\} + (a_{\bar k}^u, a_{\bar j}^v) -(b_{\bar k}^u, b_{\bar j}^v).
\end{equation}
\end{theorem}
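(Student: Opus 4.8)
The plan is to start from the three identities already extracted in the text, namely \eqref{eq:uvjk} for $(u,v)_{j\bar k}$, the expression \eqref{eq:no-1} for $\pi_*\{L_jL_{\bar k}\mathbf u,\mathbf v^*\}$ coming from differentiating \eqref{eq:re-u}, and the two computations of $\pi_*\{L_j\mathbf u,L_k\mathbf v^*\}$ and $\pi_*\{L_{\bar k}\mathbf u,L_{\bar j}\mathbf v^*\}$ that use the horizontality of the $V_j$ together with the Lefschetz/$*$-formula \eqref{eq:star-lefschetz}. First I would combine these to rewrite $(u,v)_{j\bar k}$ purely in terms of the curvature bracket $\pi_*\{[L_{\bar k},L_j]\mathbf u,\mathbf v^*\}$, the ``mixed Hodge'' terms $(i_t^*[\partial^E,\delta_{V_j}]\mathbf u, i_t^*[\partial^E,\delta_{V_k}]\mathbf v)$ and $(i_t^*[\dbar,\delta_{\overline{V_k}}]\mathbf u, i_t^*[\dbar,\delta_{\overline{V_j}}]\mathbf v)$, and the correction terms $(b_j^u,b_k^v)$ and $(b_{\bar k}^u,b_{\bar j}^v)$ built from $\dbar V_j$ and $\partial\overline{V_j}$.

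Next I would bring in the definition of the Chern curvature $\Theta_{j\bar k}$ via $(\Theta_{j\bar k}u,v)=(D_{t^j}u,D_{t^k}v)-(u,v)_{j\bar k}$, using Proposition \ref{pr:chern} which says $D_{t^j}u=\mathbb H^t(i_t^*[\partial^E,\delta_{V_j}]\mathbf u)$ and Theorem \ref{th:holomorphic} together with Proposition \ref{pr:10-chern} which identify $a_{\bar j}^v=i_t^*[\dbar,\delta_{\overline{V_j}}]\mathbf v$ as a representative (up to a $\dbar^t$-exact term) of $\dbar_{t^j}v$. The point is that $(D_{t^j}u,D_{t^k}v)$ equals $(i_t^*[\partial^E,\delta_{V_j}]\mathbf u,\,i_t^*[\partial^E,\delta_{V_k}]\mathbf v)$ minus $(a_j^u,a_k^v)$, since $a_j^u=D_{t^j}u-i_t^*[\partial^E,\delta_{V_j}]\mathbf u$ is by construction orthogonal to the harmonic space while $D_{t^j}u$ lies in it — so the cross terms vanish by orthogonality and one is left with a clean Pythagoras splitting. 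Feeding this in, the two mixed Hodge terms $(i_t^*[\partial^E,\delta_{V_j}]\mathbf u, i_t^*[\partial^E,\delta_{V_k}]\mathbf v)$ cancel between $(D_{t^j}u,D_{t^k}v)$ and $(u,v)_{j\bar k}$; likewise, because $v$ is holomorphic, $i_t^*[\dbar,\delta_{\overline{V_k}}]\mathbf v$ differs from $a_{\bar k}^v$ only by a $\dbar^t$-exact piece, and pairing against harmonic/closed data lets one replace $(i_t^*[\dbar,\delta_{\overline{V_k}}]\mathbf u, i_t^*[\dbar,\delta_{\overline{V_j}}]\mathbf v)$ by $(a_{\bar k}^u,a_{\bar j}^v)$. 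Collecting the surviving terms gives exactly \eqref{eq:theta_jk}.

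The main obstacle is bookkeeping rather than depth: one has to be careful that every $i_t^*$-restriction of a bracket $[\dbar,\delta_{\overline{V_j}}]\mathbf u$ or $[\partial^E,\delta_{V_j}]\mathbf u$ that appears is paired against something that kills its non-harmonic or non-closed part, so that the passage from ``arbitrary representative'' expressions to the intrinsic $a_j^u$, $a_{\bar j}^v$, $b_j^u$, $b_{\bar j}^v$ is legitimate and independent of all the auxiliary choices ($\mathbf u$, $\mathbf v^*$, the smooth extensions of the primitive components). The horizontality of $V_j$ is used twice — once to get $\pi_*\{L_j\mathbf u,L_k\mathbf v^*\}$ into the stated form via \eqref{eq:star-lefschetz} (this is where the $b$-terms with $\dbar V_j$ and $\partial\overline{V_j}$ are produced, since $L_j$ on $\mathbf v^*$ picks up $\dbar V_j\lrcorner$-type corrections through the Lefschetz operator) and once, implicitly, through Propositions \ref{pr:10-chern} and \ref{pr:chern}. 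I would also double-check the sign conventions in \eqref{eq:star-lefschetz} and in $(\Theta_{j\bar k}u,v)=(D_{t^j}u,D_{t^k}v)-(u,v)_{j\bar k}$, since the final formula has a definite sign pattern $(b^u_j,b^v_k)-(a^u_j,a^v_k)+\pi_*\{[L_j,L_{\bar k}]\mathbf u,\mathbf v^*\}+(a^u_{\bar k},a^v_{\bar j})-(b^u_{\bar k},b^v_{\bar j})$ whose correctness hinges on consistently tracking the sign flip between $[L_{\bar k},L_j]$ and $[L_j,L_{\bar k}]$ and the $(-1)$'s from moving $\delta_{\overline{V_k}}$ past forms.
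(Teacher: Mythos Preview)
Your proposal is correct and follows essentially the same route as the paper: derive \eqref{eq:uvjk}, expand the two cross terms $\pi_*\{L_j\mathbf u,L_k\mathbf v^*\}$ and $\pi_*\{L_{\bar k}\mathbf u,L_{\bar j}\mathbf v^*\}$ using horizontality and \eqref{eq:star-lefschetz}, then subtract from $(D_{t^j}u,D_{t^k}v)$ via the orthogonal splitting $i_t^*[\partial^E,\delta_{V_j}]\mathbf u=D_{t^j}u-a_j^u$. One small simplification: in the paper $a_{\bar j}^v$ is \emph{defined} to be $i_t^*[\dbar,\delta_{\overline{V_j}}]\mathbf v$, so the replacement $(i_t^*[\dbar,\delta_{\overline{V_k}}]\mathbf u,\,i_t^*[\dbar,\delta_{\overline{V_j}}]\mathbf v)=(a_{\bar k}^u,a_{\bar j}^v)$ is tautological and needs no appeal to $\dbar^t$-exactness or holomorphicity of $v$.
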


\textbf{Remark}: For the middle term in the above formula, we shall use
\begin{equation}\label{eq:LJK-Lie}
 [L_j,L_{\bar k}]= [d^{E}, \delta_{[V_j,\bar V_k]}]+ \Theta(E,h)(V_j,\bar V_k).
\end{equation}
In order to study the other terms, we have to use H\"ormander's $L^2$-theory \cite{Hormander65} for the generalized $\dbar$-equation.

\subsection{Generalized $\dbar$-equation associated to the curvature formula}

We shall use the following lemma:

\begin{lemma}\label{le:re-u-1} If both $\mathbf u$ and $\mathbf u'$ are representatives of $u$ then 
\begin{equation}\label{eq:re-u-1}
  i_t^*L_{\bar k} (\mathbf u-\mathbf u')=0, \ \ i_t^*L_{j} (\mathbf u-\mathbf u')=0.
\end{equation}
\end{lemma}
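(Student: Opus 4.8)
The plan is to reduce the statement to the observation that both operators $L_{\bar k}$ and $L_j$, after restriction to a fibre, are built out of $\dbar$, $\partial^E$ and contractions by $V_k$ (resp. $\overline{V_k}$), together with the fact that $\mathbf u - \mathbf u'$ vanishes on each fibre. First I would write $\mathbf w := \mathbf u - \mathbf u'$; since $\mathbf u|_{X_t} = \mathbf u'|_{X_t} = u^t$ for every $t$, the form $\mathbf w$ restricts to zero on every $X_t$, so in local coordinates $\mathbf w = \sum_j dt^j \wedge a_j + \sum_k d\bar t^k \wedge b_k$ for smooth $E$-valued forms $a_j$, $b_k$ on $\mathcal X$. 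This is exactly the decomposition already used in the proof of Theorem \ref{th:holomorphic}.

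Next I would compute $i_t^* L_{\bar k} \mathbf w$. Using $L_{\bar k} = d^E \delta_{\overline{V_k}} + \delta_{\overline{V_k}} d^E = [d^E, \delta_{\overline{V_k}}]$ and $\pi_* \overline{V_k} = \partial/\partial \bar t^k$, one has $\delta_{\overline{V_k}} (d\bar t^l) = \delta^l_k$ and $\delta_{\overline{V_k}}(dt^l) = 0$ on the level of the coordinate differentials from the base; hence for bidegree/pullback reasons $i_t^* \delta_{\overline{V_k}} \mathbf w = i_t^*(b_k) = 0$ (the interior product picks out $b_k$, which restricts to $0$ on $X_t$ because $\mathbf w$ does — more precisely $b_k = \delta_{\overline{V_k}} \mathbf w$ up to terms involving $dt^j, d\bar t^l$ which die under $i_t^*$). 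Similarly $i_t^* \delta_{\overline{V_k}} d^E \mathbf w$: here $d^E \mathbf w$ again has the form $\sum dt^j \wedge (\cdots) + \sum d\bar t^l \wedge (\cdots)$ plus a "horizontal" piece coming from differentiating $a_j, b_k$ in the fibre directions, but that horizontal piece of $d^E \mathbf w$ is $\sum dt^j \wedge d^{E_t}(\text{something}) + \cdots$ — the key point is that $i_t^*\bigl(\delta_{\overline{V_k}} d^E \mathbf w\bigr)$ only sees the coefficient of $d\bar t^k$ in $d^E\mathbf w$, which is $d^{E_t} i_t^*(b_k) \pm i_t^*(\partial b_k / \partial \bar t^k$-type terms$)$, all of which vanish once we use $i_t^* b_k = 0$ for all $t$ (so its fibre derivatives in the fibre directions vanish too, being intrinsic to the vanishing restriction). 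Carrying this out carefully gives $i_t^* L_{\bar k}\mathbf w = 0$. The argument for $i_t^* L_j \mathbf w = 0$ is the mirror image, with $\delta_{V_j}$ in place of $\delta_{\overline{V_k}}$ and the coefficient $a_j$ (which likewise restricts to $0$ on every fibre) in place of $b_k$.

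The main obstacle I anticipate is purely bookkeeping: making precise the claim that $i_t^*$ annihilates all terms of $d^E \mathbf w$ except the ones that are controlled by the fibrewise restrictions of $a_j$ and $b_k$. The clean way to organize this is to note that $\mathbf w$, viewed as a section of $\wedge^{p,q} T^*_{\mathcal X}\otimes E$, lies in the subsheaf generated by $dt^j$ and $d\bar t^k$; that $d^E$ preserves this subsheaf modulo terms whose $i_t^*$-pullback is determined by $\dbar^t$ and $\partial^{E_t}$ applied to $i_t^*(a_j)$, $i_t^*(b_k)$; and that $\delta_{V_j}$, $\delta_{\overline{V_j}}$ shift one back into a form whose pullback is a fibrewise differential operator applied to the (identically vanishing) $i_t^* a_j$, $i_t^* b_k$. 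Since $t$ is arbitrary, $i_t^* a_j \equiv 0$ on $\mathcal X$ forces its fibre derivatives to vanish as well, closing the argument. No analytic input is needed — this is a statement about the symbol of the operators and the support of $\mathbf w$.
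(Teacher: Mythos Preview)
Your argument contains a genuine error: the claim that $i_t^* b_k = 0$ (and likewise $i_t^* a_j = 0$) is false. The decomposition $\mathbf w = \sum dt^j \wedge a_j + \sum d\bar t^k \wedge b_k$ expresses the fact that $\mathbf w$ pulls back to zero on each fibre \emph{because of the factors $dt^j$, $d\bar t^k$}, not because the coefficients $a_j$, $b_k$ themselves restrict to zero. A simple counterexample: for the trivial fibration $\mathbb C^2_{z,t}\to\mathbb C_t$ and $u=0$, both $\mathbf u=0$ and $\mathbf u'=dt\wedge dz$ are representatives; here $a_1 = dz$ and $i_t^* a_1 = dz \neq 0$. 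Every subsequent step of your sketch (including the ``clean way to organize this'' at the end) rests on this false vanishing, so the argument does not close.

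The paper's proof avoids this trap entirely. Instead of splitting $L_{\bar k}$ into its two summands and chasing the coefficients, it uses the Leibniz-type identity
\[
(d^E\delta_V+\delta_V d^E)(df\wedge a)=d(Vf)\wedge a+df\wedge(d^E\delta_V+\delta_V d^E)a,
\]
applied with $f=t^j$ or $f=\bar t^l$ and $V=V_j$ or $\overline{V_k}$. Since $V_j(t^l)=\delta_{jl}$ and $\overline{V_k}(\bar t^l)=\delta_{kl}$ are constants (and the mixed pairings vanish), the term $d(Vf)$ is always zero, so $L_{\bar k}\mathbf w$ and $L_j\mathbf w$ remain in the ideal generated by $dt^j, d\bar t^l$ and hence pull back to zero on $X_t$. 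No information about $i_t^* a_j$ or $i_t^* b_k$ is needed. Your sketch can be repaired by proving and invoking this identity instead of attempting to control the coefficients individually.
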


\begin{proof} By definition, we have
\begin{equation}
\mathbf u- \mathbf{u}'=\sum dt^j\wedge a_j +\sum d\bar t^k \wedge b_k.
\end{equation}
Since
\begin{equation}\label{eq:re-u-2}
(d^E\delta_V +\delta_Vd^E)(df\wedge a)=d(Vf)\wedge a+ df\wedge (d^E\delta_V +\delta_Vd^E)a.
\end{equation}
Apply this formula to $f=t^j,\bar t^k$, $V=V_j, \overline{V_k}$ and $a=a_j, b_k$, we get \eqref{eq:re-u-1}.
\end{proof}

Now we can prove:

\begin{lemma}\label{le:abc-later} With the notation in Theorem \ref{th:theta_jk}, we have 
\begin{equation}\label{eq:abc-dbar-star}
(\dbar^t)^*a_j^u=(\dbar^t)^*b^v_{\bar j}=0, \ \ \forall \ t\in B, \ \forall\ 1\leq j\leq m.
\end{equation} 
Moreover, if $\mathcal H^{p,q}_t\subset \ker \partial^{E_t}$ for every $t\in B$ then 
\begin{equation}\label{eq:abc-later}
\dbar^t a_j^u=\partial^{E_t} b_i^u+c_j^u, \   \partial^{E_t} a_{\bar j}^v=-\dbar^t b_{\bar j}^v-c_{\bar j}^v, 
\end{equation}
and each $a_j^u$ is the $L^2$-minimal solution of  \eqref{eq:abc-later}, where
\begin{equation}
c_j^u:=(V_j \lrcorner~ \Theta(E,h))|_{X_t} \wedge u^t, \ c_{\bar j}^v:= (\overline{V_j} \lrcorner~ \Theta(E,h))|_{X_t} \wedge v^t.
\end{equation}
Assume further that $\mathcal H^{p,q}_t =\ker  \partial^{E_t} \cap \ker (\partial^{E_t})^*$ for every $t\in B$ then each $a_{\bar j}^v$ is also the $L^2$-minimal solution of \eqref{eq:abc-later}. 
\end{lemma}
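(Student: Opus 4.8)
The plan is to deduce everything from the defining formulas \eqref{eq:ab} and the facts that $u^t,v^t$ are harmonic, $V_j$ is horizontal, and (for the last two assertions) that the harmonic space is annihilated by $\partial^{E_t}$ (resp. by $\partial^{E_t}$ and its adjoint). I would argue in three stages, matching the three claims.

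First, for \eqref{eq:abc-dbar-star}: recall from the proof of Theorem \ref{th:holomorphic} and Lemma \ref{le:re-u-1} that $D_{t^j}u$ and $i_t^*[\partial^E,\delta_{V_j}]\mathbf u$ both arise by applying $\mathbb H^t$, so $a_j^u = D_{t^j}u - i_t^*[\partial^E,\delta_{V_j}]\mathbf u$ lies in the orthogonal complement of $\mathcal H^{p,q}_t$ and is $\dbar^t$-exact (more precisely, $a_j^u = -\,\dbar^t(\cdot)$ on the image side of the Hodge decomposition). The cleanest route: $D_{t^j}u = \mathbb H^t(i_t^*[\partial^E,\delta_{V_j}]\mathbf u)$ by Proposition \ref{pr:chern}, so $a_j^u = -(\mathrm{Id}-\mathbb H^t)\,i_t^*[\partial^E,\delta_{V_j}]\mathbf u$, and since $i_t^*[\partial^E,\delta_{V_j}]\mathbf u$ is $\dbar^t$-closed (differentiate $i_t^*\dbar\mathbf u\equiv 0$ along $V_j$, using that horizontality kills the unwanted commutator term, cf. the computation in Proposition \ref{pr:10-chern}), its projection to $(\ker\dbar^t)^\perp = \mathrm{Im}\,(\dbar^t)^*$ is $\dbar^t$-exact, hence $(\dbar^t)^*a_j^u=0$. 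The identity $(\dbar^t)^*b^v_{\bar j}=0$ is dual: $b^v_{\bar j} = \partial\overline{V_j}|_{X_t}\lrcorner\, v^t$, and applying $*$ together with \eqref{eq:star-lefschetz} and the commutation of contraction by a horizontal field with $*$ turns this into a statement about $*v^t$, which is harmonic; then $(\dbar^t)^* = -*\partial^{E_t}*$ (up to the twist by the metric) reduces $(\dbar^t)^*b^v_{\bar j}=0$ to $\partial^{E_t}* v^t=0$, i.e. harmonicity of $v^t$.

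Second, for \eqref{eq:abc-later}: here I would compute $\dbar^t a_j^u$ directly. Since $\dbar^t D_{t^j}u = 0$ (the value of a section of $\mathcal H^{p,q}$ is harmonic, hence $\dbar^t$-closed), we get $\dbar^t a_j^u = -\,i_t^*\dbar[\partial^E,\delta_{V_j}]\mathbf u = -\,i_t^*[\dbar,[\partial^E,\delta_{V_j}]]\mathbf u$ (using $i_t^*\dbar\mathbf u$-terms vanish on $X_t$ by harmonicity). Now expand the double bracket via the graded Jacobi identity: $[\dbar,[\partial^E,\delta_{V_j}]] = [[\dbar,\partial^E],\delta_{V_j}] + [\partial^E,[\dbar,\delta_{V_j}]]$; the first summand is $[\Theta(E,h),\delta_{V_j}] = -\,(V_j\lrcorner\,\Theta(E,h))\wedge\cdot$ up to sign, giving the curvature term $c_j^u$, while $[\dbar,\delta_{V_j}]\mathbf u$ restricts on $X_t$ to $\dbar V_j|_{X_t}\lrcorner\, u^t = b_j^u$ (horizontality again), so the second summand contributes $\partial^{E_t}b_j^u$ once we use $\mathcal H^{p,q}_t\subset\ker\partial^{E_t}$ to discard the term with $\partial^{E_t}u^t$. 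Bookkeeping the signs yields $\dbar^t a_j^u = \partial^{E_t}b_j^u + c_j^u$, and the second equation of \eqref{eq:abc-later} is obtained by the same computation applied to $a_{\bar j}^v$ and $b_{\bar j}^v$, or by conjugate-duality via $*$. Combined with $(\dbar^t)^*a_j^u=0$ from the first stage, $a_j^u$ is then the $L^2$-minimal (i.e. orthogonal to $\ker\dbar^t$) solution.

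Third, for the minimality of $a_{\bar j}^v$ under the stronger hypothesis $\mathcal H^{p,q}_t = \ker\partial^{E_t}\cap\ker(\partial^{E_t})^*$: I would show $(\dbar^t)^* a_{\bar j}^v=0$, so that $a_{\bar j}^v$ is orthogonal to $\ker\dbar^t$ and hence is the minimal solution of the second equation in \eqref{eq:abc-later}. Writing $a_{\bar j}^v = i_t^*[\dbar,\delta_{\overline{V_j}}]\mathbf v$ and using $(\dbar^t)^* = -*\,\partial^{E_t}\,*$, this becomes $\partial^{E_t}*a_{\bar j}^v=0$; transporting the star through the contraction by the horizontal field $\overline{V_j}$ (via \eqref{eq:star-lefschetz}) relates $*a_{\bar j}^v$ to $\partial^{E_t}(\cdot)$ acting on $*v^t$, which vanishes because $*v^t$ is harmonic — but one also needs the extra input $(\partial^{E_t})^* v^t=0$, i.e. $*\,\partial^{E_t}*v^t = 0$ up to sign, which is exactly the hypothesis $\mathcal H^{p,q}_t\subset\ker(\partial^{E_t})^*$; that is why the first two claims need only $\mathcal H^{p,q}_t\subset\ker\partial^{E_t}$ while this last one needs the full equality.

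The main obstacle I anticipate is the sign and bidegree bookkeeping in the Jacobi-identity expansion of $[\dbar,[\partial^E,\delta_{V_j}]]$ on $\mathcal X$ versus its restriction $i_t^*$ to $X_t$: one must carefully track which terms survive pullback to the fibre, use horizontality of $V_j$ at exactly the right places (to kill $i_t^*(\delta_{\overline{V_j}}\omega)$-type contributions and to identify $i_t^*[\dbar,\delta_{V_j}]\mathbf u$ with $b_j^u$), and invoke harmonicity of $u^t$ (to drop $\dbar^t u^t$ and $(\dbar^t)^*u^t$ terms) and the hypothesis on $\partial^{E_t}$ (to drop $\partial^{E_t}u^t$ terms). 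The analytic content is light; the work is entirely in the careful commutator algebra, which is why I would organize it around the graded Jacobi identity and \eqref{eq:LJK-Lie} rather than brute-force local coordinates.
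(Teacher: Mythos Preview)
Your proposal has two genuine gaps, both in the parts you flagged as ``light on analytic content.''

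\textbf{Stage 1 is incorrect.} Your premise that $i_t^*[\partial^E,\delta_{V_j}]\mathbf u$ is $\dbar^t$-closed is false: its $\dbar^t$ equals $-\dbar^t a_j^u = -(\partial^{E_t}b_j^u + c_j^u)$, which is generically nonzero (this is exactly what you compute in Stage~2). Worse, even if the premise held, your Hodge-decomposition argument would give $a_j^u\in\mathrm{Im}\,\dbar^t$, hence $\dbar^t a_j^u=0$, \emph{not} $(\dbar^t)^*a_j^u=0$; you have confused $\ker\dbar^t$ with $\ker(\dbar^t)^*$. The paper's route is different and unavoidable here: it writes $(\dbar^t)^*=-*\partial^{E_t}*$, passes to the dual representative $\mathbf u^*$ via \eqref{eq:star-lefschetz}, and reduces to $i_t^*\partial^E[\partial^E,\delta_{V_j}]\mathbf u^*=0$, which follows from $\partial^E[\partial^E,\delta_{V_j}]=[\partial^E,\delta_{V_j}]\partial^E$ together with $i_t^*\partial^E\mathbf u^*=\partial^{E_t}*u^t=0$ (harmonicity) and \eqref{eq:re-u-2}. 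The same device handles $(\dbar^t)^*b_{\bar j}^v=0$.

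\textbf{Stage 3 has the wrong minimality criterion.} The second equation in \eqref{eq:abc-later} is a $\partial^{E_t}$-equation, so $L^2$-minimality means $a_{\bar j}^v\perp\ker\partial^{E_t}$, i.e.\ $(\partial^{E_t})^*a_{\bar j}^v=0$ \emph{and} $a_{\bar j}^v$ has no $\partial^{E_t}$-harmonic part --- not $(\dbar^t)^*a_{\bar j}^v=0$ as you write. The paper establishes both pieces: the vanishing of the harmonic part comes from holomorphicity of $v$ (so $\mathbb H^t a_{\bar j}^v=0$) combined with the hypothesis $\mathcal H^{p,q}_t=\ker\partial^{E_t}\cap\ker(\partial^{E_t})^*$ identifying the two harmonic spaces; the $(\partial^{E_t})^*$-closedness comes from $(\partial^{E_t})^*=-*\dbar^t*$, reducing to $\dbar^t*a_{\bar j}^v=0$, which in turn reduces via $\mathbf v^*$ to $i_t^*\dbar\mathbf v^*=\dbar^t*v^t=0$ --- and this last vanishing uses $v^t\in\ker(\partial^{E_t})^*$, explaining why the full hypothesis is needed. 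Your Stage~2 (the Jacobi-identity computation of $\dbar^t a_j^u$) is essentially the paper's argument and is fine.
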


\begin{proof} By \eqref{eq:ab}, we have
\begin{equation}
(\dbar^t)^*a_j^u=-  (\dbar^t)^*\left(i_t^* [\partial^E, \delta_{V_j}]\mathbf u\right).
\end{equation}
Since $(\dbar^t)^*=-*\partial^{E_t}*$, by \eqref{eq:star-lefschetz}, the  following equality
\begin{equation}\label{eq:abc1}
i_t^*\left(\partial^E [\partial^E, \delta_{V_j}]\mathbf u^*\right)=0.
\end{equation}
implies $(\dbar^t)^*a=0$. Notice that 
\begin{equation}
\partial^E [\partial^E, \delta_{V_j}]=[\partial^E, \delta_{V_j}]\partial^E,
\end{equation}
and
\begin{equation}
i_t^*\partial^E\mathbf u^*=\partial^{E_t}*u^t=0.
\end{equation}
Thus \eqref{eq:abc1} follows from \eqref{eq:re-u-2}. By the same proof, we have $(\dbar^t)^*b_{\bar j}^v=0$, thus \eqref{eq:abc-dbar-star} is true. Now let us prove \eqref{eq:abc-later}. By \eqref{eq:ab}, we have
\begin{equation}
\dbar^t a_j^u-\partial^{E_t} b_j^u=-i_t^*\left(
\dbar [\partial^E, \delta_{V_j}] \mathbf u + \partial^E[\dbar, \delta_{V_j}]\mathbf u\right).
\end{equation}
Since by our assumption, $\mathcal H_t^{p,q}\subset 
\ker \partial^{E_t}$, thus we have
\begin{equation}
i_t^*(\partial^E\mathbf u)=0, \ \ i_t^*(\dbar\mathbf u)=0,
\end{equation}
by \eqref{eq:re-u-2}, we have
\begin{equation}
i_t^* [\partial^E, \delta_{V_j}] \dbar\mathbf u = i_t^* [\dbar, \delta_{V_j}]\partial^E\mathbf u=0.
\end{equation}
Thus
\begin{equation}
\dbar^t a_j^u-\partial^{E_t} b_j^u=-i_t^*\left(
[\dbar, [\partial^E, \delta_{V_j}] ]\mathbf u + [\partial^E, [\dbar, \delta_{V_j}]]\mathbf u\right)= i_t^*[ \delta_{V_j}, [\dbar, \partial^E]]\mathbf u=c_j^u.
\end{equation}
By the same proof, we have
\begin{equation*}
\partial^{E_t} a_{\bar j}^v=-\dbar^t b_{\bar j}^v-c_{\bar j}^v.
\end{equation*} 
Thus \eqref{eq:abc-later} is true. Now let us prove the last part. Since $v$ is a holomorphic section, by Theorem \ref{th:holomorphic}, we have that each $a_{\bar j}^v$ has no $\dbar^t$-harmonic part. By our assumption, each $\dbar^t$-harmonic space is equal to the  $\partial^{E_t}$-harmonic space, we know that each $a_{\bar j}^v$ has no $\partial^{E_t}$-harmonic part. Thus it is sufficient to prove that each $a_{\bar j}^v$ is $(\partial^{E^t})^*$-closed.  Since $(\partial^{E^t})^*=-*\dbar^t*$, it is sufficient to show that
\begin{equation}
\dbar^t*a_{\bar j}^v=0.
\end{equation}
By \eqref{eq:star-lefschetz}, we know that
\begin{equation}\label{eq:abc-v8}
\dbar^t*a_{\bar j}^v=\dbar^t i_t^*[\dbar, \delta_{\overline{V_j}}] \mathbf v^*=i_t^*[\dbar, \delta_{\overline{V_j}}]\dbar \mathbf v^*.
\end{equation}
Moreover, by our assumption, each $v^t$ is also in the $\partial^{E_t}$-harmonic space. Thue we have
\begin{equation}
i_t^*\dbar \mathbf v^*=\dbar^t*v^t=0,
\end{equation}
by \eqref{eq:re-u-2} and \eqref{eq:abc-v8}, we know that $\dbar^t*a_{\bar j}^v=0$. The proof is complete.
\end{proof}

By \eqref{eq:theta_jk} and the above lemma, the $L^2$-estimates for the generalized $\dbar$-equation (see \eqref{eq:abc-later}) determine the positivity of $\mathcal H^{p,q}$. We shall show how to use the following version of H\"ormander's $L^2$-theory \cite{Hormander65} to study the curvature of $\mathcal H^{p,q}$:

\begin{theorem}\label{th:L2} Let $(E, h)$ be a Hermitian vector bundle over an $n$-dimensional compact complex manifold $X$. Let $q\geq 0$ be an integer. Assume that $X$ possesses a Hermitian metric $\omega$ such that $\omega_{\max\{q,1\}}$ is $\dbar$-closed. Let $v$ be a smooth $\dbar$-closed $E$-valued $(n,q+1)$-form. Assume that
\begin{equation*}
    i\Theta(E,h)\wedge\omega_q >0 \ \text{on} \ X,\ (resp. \ i\Theta(E,h)\wedge\omega_q \equiv 0 \ \text{on} \ X ),
\end{equation*}
and
\begin{equation*}
    I(v):= \inf_{v=\partial^E b+c} ||b||^2_{\omega} + ([i\Theta(E,h), \Lambda_{\omega}]^{-1}c,c)_{\omega} <\infty, \ (resp. \ I(v):= \inf_{v=\partial^E b} ||b||^2_{\omega}<\infty ) .
\end{equation*}
Then there exists a smooth $E$-valued $(n,q)$-form $a$ such that $\dbar a=v$ and
\begin{equation}\label{eq:L2q}
    ||a||_{\omega}^2 \leq I(v).
\end{equation}
\end{theorem}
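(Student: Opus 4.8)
The plan is to run the classical Andreotti--Vesentini--H\"ormander duality argument: reduce the solvability of $\dbar a=v$ with the bound $\|a\|_\omega^2\le I(v)$ to the a~priori estimate
\[
|(v,\alpha)|^2\le I(v)\,\|\dbarstar\alpha\|_\omega^2
\]
valid for every smooth $E$-valued $(n,q+1)$-form $\alpha$ on the compact manifold $X$, and then conclude by Hahn--Banach, the Riesz representation theorem and elliptic regularity. Two standard facts on $(X,\omega)$ enter as black boxes. First, by Demailly's algebraic lemma (see \cite{Demailly12}) the hypothesis $i\Theta(E,h)\wedge\omega_q>0$ (resp.\ $\equiv 0$) is equivalent to the curvature operator $A_\omega:=[i\Theta(E,h),\Lambda_\omega]$ being $>0$ (resp.\ $\equiv 0$) as a Hermitian endomorphism of the bundle of $E$-valued $(n,q+1)$-forms; in the strict case $A_\omega^{-1}$ is then a bounded positive operator admitting a bounded positive square root. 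Second, since the forms in question have full holomorphic degree $p=n$ and $\omega_{\max\{q,1\}}$ is $\dbar$-closed, the torsion terms in the Bochner--Kodaira--Nakano identity on a Hermitian manifold drop out (see \cite{Demailly12}), so that for every smooth $\dbar$-closed $E$-valued $(n,q+1)$-form $\alpha$ one has
\[
\|\dbarstar\alpha\|_\omega^2=\|\partial^E\alpha\|_\omega^2+\|(\partial^E)^*\alpha\|_\omega^2+(A_\omega\alpha,\alpha)_\omega ,
\]
and in particular $\|(\partial^E)^*\alpha\|_\omega^2+(A_\omega\alpha,\alpha)_\omega\le\|\dbarstar\alpha\|_\omega^2$.

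To establish the a~priori estimate I would first reduce to $\dbar$-closed test forms. Hodge theory on the compact manifold $X$ writes an arbitrary smooth $\alpha$ as $\alpha=\alpha_0+\dbarstar w$ with $\alpha_0$ smooth and $\dbar$-closed; since $\dbar v=0$ this gives $(v,\alpha)=(v,\alpha_0)$ and $\dbarstar\alpha=\dbarstar\alpha_0$, so it is enough to treat $\dbar$-closed $\alpha$. For such $\alpha$, choose any admissible decomposition $v=\partial^E b+c$ from the definition of $I(v)$ (with $c=0$ in the $\equiv 0$ case), integrate by parts, and write
\[
(v,\alpha)=(b,(\partial^E)^*\alpha)_\omega+(A_\omega^{-1/2}c,A_\omega^{1/2}\alpha)_\omega .
\]
The Cauchy--Schwarz inequality in the direct sum $L^2_{n-1,q+1}(X,E)\oplus L^2_{n,q+1}(X,E)$ then yields
\[
|(v,\alpha)|^2\le\bigl(\|b\|_\omega^2+(A_\omega^{-1}c,c)_\omega\bigr)\bigl(\|(\partial^E)^*\alpha\|_\omega^2+(A_\omega\alpha,\alpha)_\omega\bigr),
\]
and after taking the infimum over admissible decompositions the first factor is $\le I(v)$, while by the Bochner--Kodaira--Nakano inequality above the second factor is $\le\|\dbarstar\alpha\|_\omega^2$. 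This is the desired estimate; the $\equiv 0$ case is identical, with the curvature term absent and only the $\|b\|^2$ contribution surviving.

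Finally, the estimate shows that $\dbarstar\alpha\mapsto(v,\alpha)$ is a well-defined bounded linear functional on the subspace $\{\dbarstar\alpha:\alpha\ \text{smooth}\}\subset L^2_{n,q}(X,E)$ (if $\dbarstar\alpha=0$ then $(v,\alpha)=0$ by the estimate), of norm $\le\sqrt{I(v)}$. Extending it by continuity to $\overline{\mathrm{Im}\,\dbarstar}$ and by zero on $\ker\dbar$, the Riesz representation theorem produces $a\in L^2_{n,q}(X,E)$ with $\|a\|_\omega^2\le I(v)$, with $(v,\alpha)=(\dbarstar\alpha,a)_\omega$ for all smooth $\alpha$ (that is, $\dbar a=v$ in the weak sense), and with $a\perp\ker\dbar$; since $\mathrm{Im}\,\dbarstar$ is closed on the compact manifold, $a=\dbarstar w$ for some $w$, hence $\dbarstar a=0$, so $\Delta_{\dbar}a=\dbarstar v$ is smooth and $a$ is smooth by ellipticity of $\Delta_{\dbar}$. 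Thus $\dbar a=v$ and $\|a\|_\omega^2\le I(v)$, as claimed. I expect the only genuine subtlety to be the second black box: verifying that in the non-K\"ahler Hermitian setting the torsion contributions to the Bochner--Kodaira--Nakano identity really cancel on $(n,q+1)$-forms under the single assumption that $\omega_{\max\{q,1\}}$ is $\dbar$-closed, together with keeping track of the degree shift from $q$ to $q+1$; everything else is the standard duality and regularization package.
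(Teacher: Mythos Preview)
Your proposal is correct and follows the same route as the paper: reduce via H\"ormander duality to the a~priori estimate $|(v,u)|^2\le I(v)\bigl(\|\dbar u\|_\omega^2+\|\dbar^* u\|_\omega^2\bigr)$ for all smooth $(n,q+1)$-forms $u$, and deduce this from the Bochner--Kodaira--Nakano identity together with Cauchy--Schwarz applied to $(v,u)=(b,(\partial^E)^*u)+(c,u)$. The only difference is that the paper does not black-box your second item but derives the torsion-free identity $\|\dbar u\|_\omega^2+\|\dbar^* u\|_\omega^2=\|(\partial^E)^* u\|_\omega^2+([i\Theta(E,h),\Lambda_\omega]u,u)_\omega$ directly, by writing $u=\gamma\wedge\omega_{q+1}$ with $\gamma$ of type $(n-q-1,0)$ and integrating $i\partial\dbar\bigl(i^{(n-q-1)^2}\{\gamma,\gamma\}\wedge\omega_q\bigr)$ over $X$; this computation is exactly where the hypothesis $\dbar\omega_{\max\{q,1\}}=0$ enters and where the subtlety you correctly flag is resolved.
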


\begin{proof} Let $\gamma$ be an arbitrary $E$-valued smooth $(n-q-1,0)$-form on $X$. Put
\begin{equation}
    T=i^{(n-q-1)^2}\{\gamma, \gamma\}\wedge \omega_{q}, \ \ u=\gamma\wedge\omega_{q+1}.
\end{equation}
Since $\dbar\omega_q=0$, one may check that $ i\partial\dbar T$ can be written as
\begin{equation}\label{eq:T}
    -2{\rm Re} \langle \dbar\dbar^* u,u\rangle\omega_{n} + |\dbar^* u|^2\omega_{n}
    +i^{(n-q-1)^2}\{i\Theta(E,h)\gamma, \gamma\} \wedge\omega_{q}-S,
\end{equation}
where
\begin{equation}
    S= i^{(n-q)^2}\{\dbar\gamma, \dbar\gamma\}\wedge\omega_{q}.
\end{equation}
By Lemma 4.2 in Berndtsson's lecture notes \cite{Bern10}, we have
\begin{equation}
S=(|\omega_{q+1}\wedge\dbar\gamma|^2-|\dbar\gamma|^2) \omega_n.
\end{equation}
Since $\omega_{\max\{q,1\}}$ is $\dbar$-closed, we have $\dbar\omega_{q+1}=0$, thus 
\begin{equation}
S=(|\dbar u|^2-|(\partial^E)^*u|^2) \omega_n.
\end{equation}
Notice that $\int_X i\partial\dbar T =0$, thus we have
\begin{eqnarray}\label{eq:T11}
||\dbar u||^2+||\dbar^* u||^2-||(\partial^E)^*u||^2 &=& \int_Xi^{(n-q-1)^2}\{i\Theta(E,h)\gamma, \gamma\} \wedge\omega_{q}\\
\label{eq:T12} &=& ([i\Theta(E,h), \Lambda_{\omega}]u,u),
\end{eqnarray}
where $\Lambda_{\omega}$ is the adjoint of $\omega\wedge\cdot$. By H\"ormander's $L^2$ theory, it suffices to show that
\begin{equation}\label{eq:T2}
    |(v,u)|^2\leq I(v)(||\dbar u||^2+||\dbar^* u||^2), \ \forall \ \text{smooth} \ u,
\end{equation}
which follows from \eqref{eq:T11} and \eqref{eq:T12} by the Cauchy-Schwartz inequality. 
\end{proof}

We shall also use the following generalized version of Theorem \ref{th:L2} (see Remark 13.5 in \cite{Demailly96} for related results):

\begin{theorem}\label{th:L2-1} Let $(E, h)$ be a Hermitian vector bundle over an $n$-dimensional compact complex manifold $X$. Let $q\geq 0$ be an integer. Assume that $X$ possesses a Hermitian metric $\omega$ such that $\omega_{\max\{q,1\}}$ is $\dbar$-closed. Let $v$ be a smooth $\dbar$-closed $E$-valued $(n,q+1)$-form. Assume that $i\Theta(E,h)\wedge\omega_q \geq 0$ on $X$ and
and
\begin{equation*}
    I(v):= \inf_{v=\partial^E b+c} \left\{ ||b||^2_{\omega} + \lim_{\varepsilon\to 0} \left( ([i\Theta(E,h) , \Lambda_{\omega}]+\varepsilon)^{-1}c,c\right)_{\omega} \right\} <\infty.
\end{equation*}
Then there exists a smooth $E$-valued $(n,q)$-form $a$ such that $\dbar a=v$ and
\begin{equation}\label{eq:L2q-1}
    ||a||_{\omega}^2 \leq I(v).
\end{equation}
\end{theorem}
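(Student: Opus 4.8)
The plan is to deduce Theorem~\ref{th:L2-1} from Theorem~\ref{th:L2} by a regularization argument in the metric $h$, applied fibrewise at each point of $X$ where the curvature is only semipositive rather than strictly positive. First I would fix a smooth Hermitian metric $\omega_0$ on $E$ with $i\Theta(E,\omega_0)$ bounded, and for $\varepsilon>0$ replace the curvature operator $[i\Theta(E,h),\Lambda_\omega]$ acting on $(n,q+1)$-forms by $[i\Theta(E,h),\Lambda_\omega]+\varepsilon\,\mathrm{Id}$. The point is that this perturbed operator is the curvature operator of the modified metric on a \emph{different} bundle: concretely, one works on $X\times X$ or, more simply, observes that adding $\varepsilon$ times the identity to the curvature-times-$\Lambda_\omega$ endomorphism corresponds to twisting $E$ by a line bundle whose curvature is $\varepsilon\omega$ — or, most cleanly, one applies Theorem~\ref{th:L2} with $E$ replaced by $E\otimes L_\varepsilon$ where $L_\varepsilon$ is the trivial line bundle with metric $e^{-\varepsilon\psi}$ for a strictly plurisubharmonic exhaustion-type weight; since $X$ is compact we instead just perturb the Hermitian metric on $E$ itself so that the new curvature satisfies $i\Theta\wedge\omega_q>0$ strictly. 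Thus for each $\varepsilon>0$ one obtains from Theorem~\ref{th:L2} a solution $a_\varepsilon$ of $\dbar a_\varepsilon=v$ with
\begin{equation*}
\|a_\varepsilon\|_\omega^2\le \inf_{v=\partial^E b+c}\left(\|b\|_\omega^2+\bigl(([i\Theta(E,h),\Lambda_\omega]+\varepsilon)^{-1}c,c\bigr)_\omega\right).
\end{equation*}

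The key steps, in order, are: (1) verify that the hypothesis $i\Theta(E,h)\wedge\omega_q\ge 0$ together with the finiteness of $I(v)$ guarantees that for every $\varepsilon>0$ the right-hand side above is finite and bounded above by $I(v)+o(1)$ as $\varepsilon\to 0$ — this uses that $\bigl(([i\Theta(E,h),\Lambda_\omega]+\varepsilon)^{-1}c,c\bigr)$ is monotone decreasing in $\varepsilon$ and that for an admissible decomposition $v=\partial^E b+c$ the limit defining $I(v)$ exists by monotone convergence; (2) apply Theorem~\ref{th:L2} for each fixed $\varepsilon$ to the strictly positive setting to produce $a_\varepsilon$ with $\|a_\varepsilon\|_\omega^2\le I(v)+o(1)$; (3) extract a weak limit $a$ of $a_\varepsilon$ in $L^2$ as $\varepsilon\to 0$ along a subsequence — the uniform bound gives weak compactness — and check that $\dbar a=v$ in the sense of distributions, which is immediate because $\dbar$ is a closed operator and $\dbar a_\varepsilon=v$ for all $\varepsilon$; (4) pass the norm bound to the limit by weak lower semicontinuity, $\|a\|_\omega^2\le\liminf\|a_\varepsilon\|_\omega^2\le I(v)$; (5) upgrade $a$ from $L^2$ to smooth by elliptic regularity for $\dbar$ (since $v$ is smooth, any $L^2$ solution of $\dbar a=v$ differs from a smooth one by an $L^2$ harmonic — hence smooth — form, and one may take the minimal-norm, orthogonal-to-harmonic solution which is smooth).

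I expect the main obstacle to be step~(1), namely the careful bookkeeping showing that $\lim_{\varepsilon\to0}$ of the infimum over decompositions equals (or is bounded by) the infimum over decompositions of the pointwise limit — i.e.\ interchanging the infimum over the splittings $v=\partial^E b+c$ with the limit in $\varepsilon$. The subtlety is that the optimal decomposition may depend on $\varepsilon$, and $([i\Theta(E,h),\Lambda_\omega]+\varepsilon)^{-1}$ blows up precisely on the kernel of $[i\Theta(E,h),\Lambda_\omega]$, so one must ensure that the competing decompositions used to make $I(v)$ finite have their $c$-component lying (asymptotically) in the orthogonal complement of that kernel, or else that $b$ absorbs the kernel part. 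This is handled by the very definition of $I(v)$ in the statement, which builds the $\varepsilon\to0$ limit into each individual decomposition before taking the infimum; so it suffices to fix a near-optimal decomposition $v=\partial^E b+c$ realizing $I(v)+\delta$, run steps (2)–(5) with that fixed decomposition, and let $\delta\to0$ afterward. The remaining steps are standard functional analysis and elliptic regularity and should present no difficulty.
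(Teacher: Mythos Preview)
Your overall limiting strategy---solve for each $\varepsilon>0$, extract a weak $L^2$ limit, and upgrade to smoothness via the minimal solution---is exactly the paper's approach, and your steps (3)--(5) are fine. The gap is in step~(2): on a \emph{compact} manifold $X$ there is no way to perturb the Hermitian metric on $E$, or to twist by a line bundle, so as to make $i\Theta\wedge\omega_q$ strictly positive. Twisting by the trivial line bundle with weight $e^{-\varepsilon\psi}$ would require $\psi$ to be strictly plurisubharmonic, hence constant on compact $X$; and a general perturbation $h\mapsto h_\varepsilon$ changes the curvature by a $\dbar\partial$-exact endomorphism, which cannot be made uniformly positive. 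So Theorem~\ref{th:L2} simply cannot be invoked for each $\varepsilon$ in the way you propose.

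The paper's remedy is to perturb the \emph{a priori inequality} rather than the geometry. From the Bochner--Kodaira--Nakano identity established in the proof of Theorem~\ref{th:L2} (equations \eqref{eq:T11}--\eqref{eq:T12}) one has, for every smooth $(n,q+1)$-form $u$,
\[
\|\dbar u\|^2+\|\dbarstar u\|^2-\|(\partial^E)^*u\|^2=\bigl([i\Theta(E,h),\Lambda_\omega]u,u\bigr)\ge 0.
\]
Adding $\varepsilon\|u\|^2$ and using Cauchy--Schwarz with any decomposition $v=\partial^E b+c$ yields $|(v,u)|^2\le I(v)\bigl(\|\dbar u\|^2+\|\dbarstar u\|^2+\varepsilon\|u\|^2\bigr)$, since $\bigl(([i\Theta,\Lambda_\omega]+\varepsilon)^{-1}c,c\bigr)$ is monotone in $\varepsilon$ (so your worry about interchanging $\inf$ and $\lim$ is indeed a non-issue). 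This perturbed estimate is \emph{not} the hypothesis of Theorem~\ref{th:L2}; instead, H\"ormander's functional-analytic lemma applied to the graph norm produces $a_\varepsilon,\alpha_\varepsilon$ with $\dbar a_\varepsilon+\sqrt{\varepsilon}\,\alpha_\varepsilon=v$ and $\|a_\varepsilon\|^2+\|\alpha_\varepsilon\|^2\le I(v)$. The error term $\sqrt{\varepsilon}\,\alpha_\varepsilon\to 0$ as currents, a weak limit of $a_\varepsilon$ solves $\dbar a'=v$ with $\|a'\|^2\le I(v)$, and the $L^2$-minimal solution is smooth. In short: replace your ``perturb the metric and apply Theorem~\ref{th:L2}'' by ``perturb the estimate and accept an $O(\sqrt\varepsilon)$ error term in the equation''.
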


\begin{proof} By \eqref{eq:T11} and \eqref{eq:T12}, for every $\varepsilon>0$, we have 
\begin{equation}\label{eq:T2-1}
    |(v,u)|^2\leq I(v)(||\dbar u||^2+||\dbar^* u||^2+\varepsilon ||u||^2), \ \forall \ \text{smooth} \ u,
\end{equation}
By  H\"ormander's $L^2$ theory, there exist $a_{\varepsilon}$ and $\alpha_{\varepsilon}$ such that
\begin{equation}
\dbar a_{\varepsilon}+\sqrt{\varepsilon} \alpha_{\varepsilon} =v,
\end{equation}
and
\begin{equation}
||a_\varepsilon||^2+||\alpha_{\varepsilon}||^2\leq I(v).
\end{equation}
Thus $\sqrt{\varepsilon} \alpha_{\varepsilon}\to 0$ in the sense of current as $\varepsilon\to 0$. By taking a weak limit, say $a'$, of $a_{\varepsilon}$, we know that there exists $a'$ such that $\dbar a'=v$ in the sense of current and
\begin{equation}
||a'||^2 \leq I(v).
\end{equation}
Let $a$ be the $L^2$-minimal solution. Then $a$ fits our needs.
\end{proof}

\subsection{Curvature of $\mathcal H^{n,q}$} Theorem \ref{th:L2-1} implies that there is a good $L^2$-estimate for \eqref{eq:abc-later} if $p=n$. We shall show how to use it to prove Theorem \ref{th:main} in this subsection. 

Let us study the middle term in \eqref{eq:theta_jk} first. By \eqref{eq:LJK-Lie}, we have
\begin{equation}\label{eq:Ljk}
\pi_* \{[L_j, L_{\bar k}] \mathbf u_j, \mathbf u_k^*\}=\left (\Theta(E,h)(V_j,\bar V_k) u_j, u_k\right ) + (\partial^{E_t}\delta_{[V_j,\bar V_k]}u_j, u_k).
\end{equation}
The first term is clear. For the second term, we need the following lemma: 

\begin{lemma}\label{le:key} $\sum(\partial^{E_t}\delta_{[V_j,\bar V_k]}u_j, u_k)$ can be written as $A + Nak$, where
\begin{equation}\label{eq:A}
A:=\sum i^{(n-q)^2} \pi_*\{c_{j\bar k}(\omega)\omega_{q-1} \wedge i\Theta(E,h) \mathbf u_j^*, \mathbf u_k^*\}, \ c_{j\bar k}(\omega):=\langle V_j, V_k \rangle_{\omega} ,
\end{equation}
and $Nak:=\sum  (c_{j\bar k}(\omega) \dbar^t *u_j^t, \dbar^t *u_k^t)\geq 0$.
\end{lemma}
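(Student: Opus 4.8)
The plan is to compute $\delta_{[V_j,\bar V_k]}u_j$ explicitly and then apply $\partial^{E_t}$. First I would note that $[V_j,\bar V_k]$ is a smooth vector field on $\mathcal X$ whose restriction to $X_t$ decomposes into its $(1,0)$ and $(0,1)$ components relative to the fibre; since each $V_j$ is the horizontal lift, the Kähler condition $i_t^*(\delta_{\overline{V_j}}\omega)=0$ should force the fibrewise $(0,1)$-part of $[V_j,\bar V_k]|_{X_t}$ to be, up to the metric-theoretic coefficient, controlled by the Lie bracket identities. The key computation is that the fibrewise contraction $\delta_{[V_j,\bar V_k]}u_j$, when $u_j$ is a harmonic $(n,q)$-form, splits into a term involving the function $c_{j\bar k}(\omega)=\langle V_j,V_k\rangle_\omega$ times a canonical operation on $u_j$ plus lower-order pieces. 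Concretely, I expect one uses that $i_t^*[V_j,\bar V_k]$ has a $(0,1)$-component equal to $-\overline{\partial V_k}|_{X_t}$-type expression contracted appropriately, together with a $(1,0)$-component that is $c_{j\bar k}(\omega)$ times a vector field dual to $\dbar$ of something — this is where the Kähler identity $\partial\bar\partial$-type symmetry enters.

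The next step is to push $\partial^{E_t}$ through. Writing $\delta_{[V_j,\bar V_k]}u_j = (\text{horizontal part}) + (\text{vertical part})$, applying $\partial^{E_t}$ and using that $u_j^t$ is $\dbar^t$-closed and $\partial^{E_t}$-harmonic (or at least using the Kähler commutation relations $[\partial^{E_t},\Lambda_{\omega^t}]=i(\dbar^t)^*$ up to curvature) should produce the two advertised pieces. The term $A$ arises from the curvature $\Theta(E,h)$ hitting the horizontal component, and after translating the fibrewise pairing into a push-forward via the dual-representative $\mathbf u_j^*$ and the Lefschetz formula \eqref{eq:star-lefschetz}, one gets exactly $i^{(n-q)^2}\pi_*\{c_{j\bar k}(\omega)\omega_{q-1}\wedge i\Theta(E,h)\mathbf u_j^*, \mathbf u_k^*\}$. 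The term $Nak$ arises from the vertical part: contracting with the fibrewise $(1,0)$-vector field dual to the metric gives, after using $[\partial^{E_t},\Lambda]=i(\dbar^t)^*$, the expression $c_{j\bar k}(\omega)(\dbar^t)^*$ of something whose adjoint pairing is $\sum(c_{j\bar k}(\omega)\dbar^t*u_j^t,\dbar^t*u_k^t)$, manifestly a Gram matrix and hence $\geq 0$.

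I would organize the proof as: (1) record the Lie-bracket identity $[V_j,\bar V_k]$ and its fibrewise $(1,0)/(0,1)$ splitting using horizontality; (2) compute $\delta_{[V_j,\bar V_k]}u_j^t$ using the Cartan-type formula $[d^E,\delta_V]=L_V$ restricted to fibres; (3) apply $\partial^{E_t}$, commute it past contractions via $\partial^{E_t}\delta_W + \delta_W\partial^{E_t} = [\partial^{E_t},\delta_W]$ and the Kähler identities on $X_t$, using $\dbar^t u_j^t=0$; (4) convert fibrewise $L^2$-pairings to push-forwards of $\{\cdot,\cdot\}$-pairings of dual-representatives via \eqref{eq:star-lefschetz}; (5) collect terms into $A$ and $Nak$ and observe $Nak\geq 0$ by positive-semidefiniteness of the Gram matrix $(\dbar^t*u_j^t,\dbar^t*u_k^t)$ weighted by the positive-semidefinite matrix $(c_{j\bar k}(\omega))$.

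The main obstacle I anticipate is step (3): keeping careful track of which commutators produce curvature terms versus $(\dbar^t)^*$-terms, and making sure the coefficient $c_{j\bar k}(\omega)=\langle V_j,V_k\rangle_\omega$ comes out cleanly rather than as a more complicated expression involving derivatives of the metric. In particular, the horizontality of $V_j$ must be used precisely to kill the unwanted cross-terms (those involving $\dbar V_j$ paired against the wrong object), and one likely needs the fact that $i_t^*(\delta_{\overline{V_j}}\omega)=0$ together with $d\omega=0$ to control $\partial$ of the horizontal distribution. A secondary subtlety is the degree bookkeeping: since $u_j^t$ is an $(n,q)$-form on an $n$-dimensional fibre, $\partial^{E_t}u_j^t=0$ automatically for bidegree reasons, which simplifies several intermediate expressions and should be invoked explicitly.
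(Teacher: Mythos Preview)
Your outline has the right ingredients but the mechanism you describe is not what actually happens, and the decisive identity is missing. First, a bidegree observation you overlook: only the fibrewise $(1,0)$-part $V$ of $[V_j,\bar V_k]$ contributes, since contracting an $(n,q)$-form with a $(0,1)$-vector gives an $(n,q-1)$-form and $\partial^{E_t}$ raises the holomorphic degree past $n$. So there is no horizontal/vertical splitting of the bracket driving the two terms as you suggest. Second, the paper does not compute $\partial^{E_t}\delta_V u_j^t$ directly: it integrates by parts at once to $(-1)^{n-q}\int_{X_t}\{\delta_V u_j^t,\dbar^t*u_k^t\}$, then uses $u_j^t=i^{(n-q)^2}\omega^t_q\wedge *u_j^t$ together with $\omega^t_q\wedge\dbar^t*u_k^t=0$ to reduce everything to the single quantity $(\delta_V\omega^t_q)\wedge *u_j^t$.

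The step you are missing is the identity $\delta_V\omega^t_q=\dbar^t\bigl(i\,c_{j\bar k}(\omega)\,\omega^t_{q-1}\bigr)$, which comes from $(L_{V_j}\bar V_k)\,\lrcorner\,\omega_q=L_{V_j}(\bar V_k\,\lrcorner\,\omega_q)-\bar V_k\,\lrcorner\,L_{V_j}\omega_q$ combined with horizontality (so that $\bar V_k\,\lrcorner\,\omega$ restricts to $-i\sum_l c_{l\bar k}(\omega)\,dt^l$) and $d\omega=0$. It is this formula that makes $c_{j\bar k}(\omega)$ appear cleanly---not any statement about $V$ itself being $c_{j\bar k}$ times a vector field. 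Once this is in hand, the Leibniz rule produces two pieces: the one carrying $\dbar^t*u_j^t$ is $Nak$ (primitivity of $\dbar^t*u_k^t$ turns the wedge pairing into the $L^2$-inner product), and the other becomes $A$ after a further integration by parts and the relation $\partial^{E_t}\dbar^t*u_k^t=\Theta(E_t,h)\,*u_k^t$, valid because $\partial^{E_t}*u_k^t=0$ by harmonicity. Thus the curvature enters through the commutator $[\partial^{E_t},\dbar^t]$ acting on $*u_k^t$, not by ``hitting the horizontal component''; the K\"ahler identity $[\partial^{E_t},\Lambda]=i(\dbar^t)^*$ is never used.
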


We shall prove the above Lemma later. By the above lemma, we have
\begin{equation}
\sum \pi_* \{[L_j, L_{\bar k}] \mathbf u_j, \mathbf u_k^*\} =A+B+ Nak,
\end{equation}
where
\begin{equation}\label{eq:B}
B:=\sum\left (\Theta(E,h)(V_j,\bar V_k)u_j, u_k\right ).
\end{equation}
Thus by \eqref{eq:theta_jk}, put
\begin{equation}
b=\sum b_j^{u_j}, \ \ a=\sum a_j^{u_j},
\end{equation}
then we have (notice that  $b_{\bar j}^{u_k}=0$ since $p=n$)
\begin{equation}\label{eq:theta_jk1}
\sum (\Theta_{j\bar k}u_j, u_k) =||b||^2-||a||^2+A+B+Nak +Gri,
\end{equation}
where
\begin{equation}\label{eq:Gri}
Gri:=\sum (a_{\bar k}^{u_j}, a_{\bar j}^{u_k}),
\end{equation}
is non-negative if $u_j=\xi_j u$. Put
\begin{equation}
C_{\varepsilon}:= \left(( [i\Theta(E^t,h), \Lambda_{\omega^t}]+\varepsilon) ^{-1} c, c\right), \ \ c:=\sum c_j^{u_j}.
\end{equation}
Then we have
\begin{equation}
\sum (\Theta_{j\bar k}u_j, u_k) = A+B-C_{\varepsilon}+Nak+Gri+R_{\varepsilon},
\end{equation}
where
\begin{equation}
R_{\varepsilon}:= \left(( [i\Theta(E^t,h), \Lambda_{\omega^t}]+\varepsilon) ^{-1} c, c\right) + ||b||^2-||a||^2.
\end{equation}

\medskip

\textbf{Proof of Theorem \ref{th:main}} By \eqref{eq:abc-later} and Theorem \ref{th:L2-1}, we know that $R_\varepsilon$ is always non-negative. In order to show that $\mathcal H^{n,q}$ is Griffiths semi-positive, it suffices to prove $$\lim_{\varepsilon\to 0} \ (A+B-C_\varepsilon)\geq 0, $$
which follows from the following lemma:

\begin{lemma}\label{le: ABC} If $\Theta(E,h)$ is $q$-semipositive with respect to $\omega$ then 
\begin{equation}
A+B+\varepsilon\sum (c_{j\bar k}(\omega) u_j, u_k)-C_{\varepsilon}\geq 0, \ \ \forall \ \varepsilon >0.
\end{equation}
\end{lemma}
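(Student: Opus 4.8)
The plan is to reduce Lemma \ref{le: ABC} to a pointwise linear-algebra inequality on each fibre $X_t$, then integrate. First I would recall the meaning of each term: $A$ and $B$ from \eqref{eq:A} and \eqref{eq:B} together reassemble, up to the $\omega_{q-1}$-wedging combinatorics, into the contraction of $i\Theta(E,h)$ against the horizontal lifts $V_j,\overline V_k$ and against the dual-representatives $\mathbf u_j^*$; the term $\varepsilon\sum(c_{j\bar k}(\omega)u_j,u_k)$ is the regularizing perturbation; and $C_\varepsilon=\big(([i\Theta(E_t,h),\Lambda_{\omega^t}]+\varepsilon)^{-1}c,c\big)$ with $c=\sum c_j^{u_j}=\sum (V_j\lrcorner \Theta(E,h))|_{X_t}\wedge u^t$ is exactly the quantity that the $L^2$-machinery (Theorem \ref{th:L2-1}) subtracts. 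So the content of the lemma is: the "full-space" curvature expression $A+B$ plus the $\varepsilon$-term dominates the "fibrewise-inverted" expression $C_\varepsilon$.

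The key step is an algebraic identity at a point $x\in X_t$. Choose coordinates so that $\omega$ is standard at $x$ and write $i\Theta(E,h)=\sum_{\alpha\bar\beta} i\Theta_{\alpha\bar\beta}\, dz^\alpha\wedge d\bar z^\beta$ as an endomorphism-valued $(1,1)$-form on the total space, splitting indices into horizontal ($t$-directions) and vertical (fibre) directions. The decisive observation is that for $(n,0)$-type forms (the case $p=n$) the dual-representatives $\mathbf u_j^*$ are $E$-valued $(0,q)$-forms wedged with the fibre volume, so that $\omega_q\wedge c_q\{i\Theta(E,h)\mathbf u^*,\mathbf u^*\}$ — the quantity appearing in Definition \ref{de:q-positive} — expands into precisely $A+B$ minus the Legendre-type term that the inverse operator $([i\Theta(E_t,h),\Lambda_{\omega^t}]+\varepsilon)^{-1}$ produces. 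Concretely, I expect to write $\mathbf u^* = \sum_j \mathbf u_j^*\wedge(\partial/\partial t^j\lrcorner\, dt)$ and apply $q$-semipositivity of $\Theta(E,h)$ to this specific $E$-valued $(n+m-q-1,0)$-form, after first solving an auxiliary fibrewise $\dbar$-type equation to adjust $\mathbf u_j^*$ so that $i_t^*(\partial/\partial t^j\lrcorner(\omega_q\wedge\Theta(E,h)\mathbf u_j^*))=0$, exactly as in the second Lemma of Section 2. The point of that normalization is that it realizes the $L^2$-minimal / Legendre-transform choice, turning the a priori inequality "$q$-semipositive $\Rightarrow \omega_q\wedge c_q\{i\Theta u,u\}\geq 0$" into the sharp inequality $A+B+\varepsilon(\cdots)-C_\varepsilon\geq 0$.

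So the steps in order are: (1) unwind $A$, $B$, $C_\varepsilon$ into contractions of $i\Theta(E,h)$ with the horizontal lifts and with $\mathbf u_j^*$, keeping careful track of the constants $C_r$, $c_q$ and the $\omega_{q-1}$-versus-$\omega_q$ bookkeeping; (2) for the term $C_\varepsilon$, recognize $([i\Theta(E_t,h),\Lambda_{\omega^t}]+\varepsilon)^{-1}c$ as the solution of a fibrewise minimization and hence as the $i_t^*$-pullback of a suitably normalized $\mathbf u_j^*$; (3) assemble $A+B+\varepsilon\sum(c_{j\bar k}(\omega)u_j,u_k)-C_\varepsilon$ into $\pi_*$ of $\omega_q\wedge c_q\{i(\Theta(E,h)+\varepsilon\,\omega\otimes\mathrm{Id})\mathbf u^*,\mathbf u^*\}$ for the normalized $\mathbf u^*$; (4) invoke Definition \ref{de:q-positive} — $q$-semipositivity of $\Theta(E,h)$, which is preserved under adding $\varepsilon\,\omega\otimes\mathrm{Id}$ — to conclude the integrand is a semipositive volume form, hence the integral is $\geq 0$; then let $\varepsilon\to 0$ in the proof of Theorem \ref{th:main}.

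The main obstacle I anticipate is step (2)–(3): verifying that the Legendre/inverse-operator term $C_\varepsilon$ produced by H\"ormander's estimate matches, sign and constant included, the "cross terms" one gets when expanding $\omega_q\wedge c_q\{i\Theta(E,h)\mathbf u^*,\mathbf u^*\}$ after imposing the normalization $i_t^*(\partial/\partial t^j\lrcorner(\omega_q\wedge\Theta\mathbf u_j^*))=0$. This requires a clean pointwise computation separating the horizontal-horizontal, horizontal-vertical, and vertical-vertical blocks of $i\Theta(E,h)$, and showing the vertical-vertical block is exactly what gets inverted by $[i\Theta(E_t,h),\Lambda_{\omega^t}]^{-1}$ while the mixed blocks contribute $A$ (through $c_{j\bar k}(\omega)$) and $B$. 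The $\varepsilon$-regularization is what makes the vertical block invertible even when $\Theta(E_t,h)\wedge\omega_q^t$ is merely semipositive, so the perturbation term $\varepsilon\sum(c_{j\bar k}(\omega)u_j,u_k)$ on the left is precisely the bookkeeping cost of replacing $i\Theta(E,h)$ by $i\Theta(E,h)+\varepsilon\,\omega\otimes\mathrm{Id}$ in the total-space inequality. Once the block decomposition is written down correctly, the rest is a direct substitution.
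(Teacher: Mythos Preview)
Your proposal is correct and follows essentially the same route as the paper: choose dual-representatives $\mathbf u_j^*$ satisfying the normalization $i_t^*\delta_{V_j}\big(T_\varepsilon\wedge\mathbf u_j^*\big)=0$ with $T_\varepsilon=\omega_q\wedge i\Theta(E,h)+\varepsilon\,\omega_{q+1}\otimes\mathrm{Id}_E$, then verify the identity $I_\varepsilon\wedge i^{m^2}dt\wedge\overline{dt}=\pi_*\big(c_q\{T_\varepsilon\wedge\mathbf u^*,\mathbf u^*\}\big)$ with $\mathbf u^*=\sum\mathbf u_j^*\wedge\delta_{V_j}dt$, which is nonnegative by $q$-semipositivity. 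The only imprecision is that your stated normalization in step~(2) should already carry the $\varepsilon$-term (i.e.\ use $T_\varepsilon$, not just $\omega_q\wedge\Theta(E,h)$), since that is what guarantees solvability and makes $Q_\varepsilon^{-1}c$ appear exactly as $C_\varepsilon$---but you identify this yourself in your last paragraph, so the plan is sound.
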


\begin{proof} Put
\begin{equation}
I_\varepsilon:=A+B+\varepsilon\sum (c_{j\bar k}(\omega) u_j, u_k)-C_{\varepsilon},
\end{equation}
and
\begin{equation}
T_\varepsilon:=\omega_q\wedge i \Theta(E,h)
+\varepsilon\omega_{q+1}\otimes Id_{E}, \ T_\varepsilon^t:=T_\varepsilon|_{X_t}.
\end{equation}
We claim that for each $j$ one may choose a dual representative $\mathbf u_j^*$ of $u_j$ such that
\begin{equation}\label{eq:ABCR}
i_t^*\delta_{V_j}  \left( T_\varepsilon \wedge \mathbf u_j^*\right)\equiv 0,
\end{equation}
on $X_t$ for every $t\in B$ and
\begin{equation}\label{eq:ABC-F}
I_\varepsilon \wedge i^{m^2}dt\wedge \overline{dt}=\pi_*\left( c_q \{T_\varepsilon \wedge \mathbf u^*, \mathbf u^*\}\right) \geq 0, \ \mathbf u^*:= \sum \mathbf u_j^*\wedge\delta_{V_j}dt,
\end{equation}
where $c_q:= i^{(m+n-q-1)^2}$ and $dt:=dt^1\wedge \cdots \wedge dt^m$.

In fact, \eqref{eq:ABCR} is equivalent to 
\begin{equation}\label{eq:ABC?}
\omega^t_q \wedge (V_j \lrcorner~ i \Theta(E,h))|_{X_t} \wedge *u^t_j + T_\varepsilon^t \wedge i_t^* \delta_{V_j} \mathbf u_j^* \equiv 0.
\end{equation}
Notice that
\begin{equation}
Q_\varepsilon^{-1}(T_\varepsilon^t  \wedge i_t^* \delta_{V_j} \mathbf u_j^*)= \omega^t_{q+1} \wedge  i_t^* \delta_{V_j} \mathbf u_j^*, \  Q_\varepsilon:=[\Theta(E^t,h), \Lambda_{\omega^t}]+\varepsilon,
\end{equation}
Thus there exists $\mathbf u_j^*$ such that \eqref{eq:ABCR} is true. Now choose $\mathbf u_j^*$ that  satisfies \eqref{eq:ABCR}, then
\begin{equation}
c=\sum i^{(n-q)^2+1} T_\varepsilon^t \wedge i_t^* \delta_{V_j} \mathbf u_j^*,
\end{equation}
which implies
\begin{equation}
Q_\varepsilon^{-1}c=\sum i^{(n-q)^2+1} \omega^t_{q+1} \wedge i_t^* \delta_{V_j} \mathbf u_j^*.
\end{equation}
Thus we have
\begin{equation}
C_\varepsilon=(Q_\varepsilon^{-1}c,c)=\pi_*\left( i^{(n-q-1)^2} \{T_\varepsilon \wedge \sum \delta_{V_j}\mathbf u_j^*, \sum \delta_{V_j} \mathbf u_j^*\}\right).
\end{equation}
Recall that
\begin{equation}
A=\sum  i^{(n-q)^2} \pi_* \{c_{j\bar k}(\omega)\omega_{q-1} \wedge i\Theta(E,h) \mathbf u_j^*, \mathbf u_k^*\},
\end{equation}
and 
\begin{equation}
B=\sum  i^{(n-q)^2} \pi_* \{ \omega_q \wedge \Theta(E,h)(V_j,\bar V_k)  \mathbf u_j^*, \mathbf u_k^*\} .
\end{equation}
Notice that
\begin{equation}\label{eq:ABCIjk}
i_t^*\left(i\delta_{V_j}\delta_{\overline V_k} T_\varepsilon \right)= \omega^t_q \wedge \Theta(E,h)(V_j,\bar V_k)  + i\Theta(E_t,h)\wedge c_{j\bar k}(\omega)\omega^t_{q-1} +\varepsilon c_{j\bar k}(\omega)\omega^t_{q}.
\end{equation}
Thus
\begin{equation}
A+B+\varepsilon\sum (c_{j\bar k}(\omega) u_j, u_k)=\sum i^{(n-q)^2} \pi_*\left\{ (i\delta_{V_j}\delta_{\overline V_k} T_\varepsilon) \wedge \mathbf u_j^*, \mathbf u_k^* \right\}.
\end{equation}
Notice that
\begin{eqnarray*}
\delta_{V_j}\delta_{\overline V_k} \{T_\varepsilon\wedge \mathbf u_j^*, \mathbf u_k^*\} & = &  (-1)^{n-q} \{(\delta_{V_j}T_\varepsilon)\wedge \mathbf u_j^*, \delta_{V_k} \mathbf u_k^* \}  - \{(\delta_{\overline V_k} T_\varepsilon) \wedge  \delta_{V_j} \mathbf u_j^*,  \mathbf u_k^* \} \\
&  & + \ \{ (\delta_{V_j}\delta_{\overline V_k} T_\varepsilon) \wedge \mathbf u_j^*, \mathbf u_k^* \} + (-1)^{n-q} \{T_\varepsilon \wedge \delta_{V_j} \mathbf u_j^*,\delta_{V_k} \mathbf u_k^* \}.
\end{eqnarray*}
By  \eqref{eq:ABCR}, we have
\begin{equation}
\pi_*\{(\delta_{V_j}T_\varepsilon)\wedge \mathbf u_j^*, \delta_{V_k} \mathbf u_k^* \} =- \pi_*  \{T_\varepsilon \wedge \delta_{V_j} \mathbf u_j^*,\delta_{V_k} \mathbf u_k^* \},
\end{equation}
and
\begin{equation}
\pi_* \{(\delta_{\overline V_k} T_\varepsilon) \wedge  \delta_{V_j} \mathbf u_j^*,  \mathbf u_k^* \} = (-1)^{n-q} \pi_* \{T_\varepsilon \wedge \delta_{V_j} \mathbf u_j^*,\delta_{V_k} \mathbf u_k^* \} .
\end{equation}
Thus
\begin{equation}
\pi_* \delta_{V_j}\delta_{\overline V_k} \{T_\varepsilon\wedge \mathbf u_j^*, \mathbf u_k^*\} = \pi_* \{ (\delta_{V_j}\delta_{\overline V_k} T_\varepsilon) \wedge \mathbf u_j^*, \mathbf u_k^* \}
- (-1)^{n-q} \pi_*  \{T_\varepsilon \wedge \delta_{V_j} \mathbf u_j^*,\delta_{V_k} \mathbf u_k^* \},
\end{equation}
which implies that
\begin{equation}
\sum i^{(n-q)^2}  \pi_*\left( i\delta_{V_j}\delta_{\overline V_k} \{T_\varepsilon\wedge \mathbf u_j^*, \mathbf u_k^*\}\right) =I_\varepsilon.
\end{equation}
Notice that for bi-degree reason, we have
\begin{equation}
(\delta_{V_j}\delta_{\overline V_k} \{T_\varepsilon\wedge \mathbf u_j^*, \mathbf u_k^*\})\wedge dt\wedge\overline{dt} = (-1)^m \{T_\varepsilon\wedge \mathbf u_j^*, \mathbf u_k^*\} \wedge \delta_{V_j} dt \wedge \overline{\delta_{V_k} dt }.
\end{equation}
Thus  \eqref{eq:ABC-F} is true. The proof is complete.
\end{proof}

\textbf{Proof of the remark behind Theorem \ref{th:main}}: Since $p=n$, we know that 
\begin{equation}
b_{\bar j}^{u_k}=c_{\bar j}^{u_k}=0.
\end{equation}
Thus by Lemma \ref{le:abc-later}, if $\mathcal H^{p,q}_t =\ker  \partial^{E_t} \cap \ker (\partial^{E_t})^*$ for every $t\in B$ then
we have
\begin{equation}
a_{\bar j}^{u_k}=0,
\end{equation}
which implies that $Gri=0$. Thus $\mathcal H^{n,q}$ is Nakano semi-positive.

\medskip

\textbf{Proof of Lemma \ref{le:key}}: Put
\begin{equation}
I_{jk}:=(\partial^{E_t}\delta_{[V_j,\bar V_k]}u_j, u_k).
\end{equation}
Denote by $V$ the $(1,0)$-part of $[V_j,\bar V_k]$, then we have 
\begin{equation}
I_{jk}=\int_{X_t} \{\partial^{E_t}\delta_V u_j^t, *u_k^t\}=(-1)^{n-q}\int_{X_t}\{\delta_V u_j^t, \dbar^t*u_k^t\}.
\end{equation}
Since
\begin{equation}
u_j^t=i^{(n-q)^2}\omega^t_q\wedge*u^t_j,
\end{equation}
and
\begin{equation}\label{eq:key-123}
i^{(n-q)^2} \omega^t_q\wedge\dbar^t*u_k^t=\dbar^t u_k^t=0,
\end{equation}
we have
\begin{equation}
I_{jk}=(-1)^{n-q}i^{(n-q)^2} \int_{X_t}\{(\delta_V  \omega^t_q)\wedge * u_j^t, \dbar^t*u_k^t\}.
\end{equation}
By definition, $\delta_V  \omega^t_q$ is the $(q-1,q)$-part of
\begin{equation}
i_t^*\left((L_{V_j}\bar V_{k}) ~\lrcorner~ \omega_q\right).
\end{equation}
Since
\begin{equation}
(L_{V_j}\bar V_{k}) ~\lrcorner~ \omega_q=L_{V_j} ( \bar V_{k} ~\lrcorner~ \omega_q)- \bar V_{k} ~\lrcorner~ L_{V_j}\omega_q,
\end{equation}
and 
\begin{equation}
\bar V_{k} ~\lrcorner~ \omega_q=  -i\sum c_{j\bar k} (\omega) dt^j\wedge \omega_{q-1}, \ \     L_{V_j}\omega_q=   d\left(i\sum c_{j\bar k} (\omega) d\bar t^k\wedge \omega_{q-1}\right).
\end{equation}
Thus 
\begin{equation}
i_t^*\left((L_{V_j}\bar V_{k}) ~\lrcorner~ \omega_q\right)=i_t^* d\left(i c_{j\bar k} (\omega) \wedge \omega_{q-1}\right),
\end{equation}
and 
\begin{equation}
\delta_V  \omega^t_q=\dbar^t\left(i c_{j\bar k} (\omega) \wedge \omega^t_{q-1}\right).
\end{equation}
Now $I_{jk}$ can be written as 
\begin{equation*}
i^{(n-q+1)^2}\int_{X_t}\{\dbar^t( c_{j\bar k} (\omega) \wedge \omega^t_{q-1} \wedge * u_j^t), \dbar^t*u_k^t\} -\{c_{j\bar k} (\omega) \wedge \omega^t_{q-1}\wedge ~ \dbar^t *u_j^t, \dbar^t*u_k^t\}.
\end{equation*}
By \eqref{eq:key-123}, each $\dbar^t*u_k^t$ is primitive, thus
\begin{equation}
-\sum i^{(n-q+1)^2}\int_{X_t} \{c_{j\bar k} (\omega) \wedge \omega^t_{q-1}\wedge ~ \dbar^t *u_j^t, \dbar^t*u_k^t\}=Nak.
\end{equation}
Now it suffices to show 
\begin{equation}\label{eq:key-1234}
A=\sum  i^{(n-q+1)^2} \int_{X_t}\{\dbar^t( c_{j\bar k} (\omega) \wedge \omega^t_{q-1} \wedge * u_j^t), \dbar^t*u_k^t\}.
\end{equation}
Notice that the right hand side can be written as
\begin{equation}\label{eq:key-12345}
\sum i^{(n-q)^2}(-i) \int_{X_t}\{ c_{j\bar k} (\omega) \wedge \omega^t_{q-1} \wedge * u_j^t, \partial^{E_t}\dbar^t*u_k^t\}.
\end{equation}
Since each $u_k^t$ is harmonic, we have $\partial^{E_t}*u_k^t\equiv 0$ and
\begin{equation}\label{eq:key-12}
\partial^{E_t}\dbar^t*u_k^t=\Theta(E_t,h) *u_k^t.
\end{equation}
Thus \eqref{eq:key-1234} follows from \eqref{eq:key-12345} and \eqref{eq:key-12}. The proof is complete.

\section{Curvature of the Weil-Petersson metric}

We shall prove Theorem \ref{th:wp-general} in this section. Let $\pi: \mathcal X \to B$ be a proper holomorphic submersion. Then we have the classical Kodaira-Spencer map
\begin{equation}
\kappa :v \mapsto \kappa(v) \in H^{0,1}(T_{X_t})\simeq H^{n,n-1}(T^*_{X_t})^*, \ \forall  \ v\in T_tB, \ t\in B.
\end{equation} 
we shall prove that:

\begin{proposition}\label{pr:ks-dual} Assume that the dimension of $H^{0,1}(T_{X_t})$ does not depend on $t$. Then \begin{equation}
\kappa: v\mapsto \kappa(v)\in H^{n,n-1}(T^*_{X_t})^*,  \ \forall  \ v\in T_tB,
\end{equation} 
defines a holomorphic bundle map from $T_B$ to the dual bundle of
\begin{equation}
\mathcal H^{n,n-1}:=\{H^{n,n-1}(T^*_{X_t})\}_{t\in B}.
\end{equation} 
\end{proposition}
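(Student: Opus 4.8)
\emph{Overall plan.} The plan is first to equip the dual of $\mathcal H^{n,n-1}$ with its holomorphic structure, then to realize $\kappa$ as a smooth bundle map via Serre duality, and finally to show this map is holomorphic by differentiating the Serre pairing in the antiholomorphic directions on $B$. For the first point, Serre duality on the compact fibre $X_t$ gives a natural isomorphism $H^{n,n-1}(T^*_{X_t})\simeq H^{0,1}(T_{X_t})^{*}$, so the hypothesis forces $\dim H^{n,n-1}(T^*_{X_t})$ to be independent of $t$; in the terminology of the introduction, $\mathcal H^{n,n-1}$ is $\mathcal H^{p,q}$ for $E=T^*_{\mathcal X/B}$ and $(p,q)=(n,n-1)$ (after fixing any auxiliary Hermitian data), so the hypothesis is exactly assumption $\mathbf A$ for this bundle, and Theorem \ref{th:holomorphic} makes it — and hence its dual $(\mathcal H^{n,n-1})^{*}$ — a holomorphic vector bundle. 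For $v\in T_tB$ the class $\kappa(v)\in H^{0,1}(T_{X_t})$ acts on $[\phi]\in H^{n,n-1}(T^*_{X_t})$ by $\langle\kappa(v),[\phi]\rangle=\int_{X_t}\kappa(v)\cdot\phi$, where $\cdot$ wedges the form parts and contracts the bundle parts via $T_{X_t}\otimes T^*_{X_t}\to\mathcal O$; this is well defined on cohomology since $\kappa(v)$ and $\phi$ may be taken $\dbar^t$-closed. Thus $v\mapsto\langle\kappa(v),\cdot\rangle$ is $\C$-linear on each fibre of $T_B$ and depends smoothly on $t$, i.e. it is a smooth section of $\mathrm{Hom}(T_B,(\mathcal H^{n,n-1})^{*})$; it remains to prove holomorphicity.

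\emph{Reduction, and a global expression for the pairing.} A section of $(\mathcal H^{n,n-1})^{*}$ is holomorphic precisely when it pairs to a holomorphic function with every local holomorphic section of $\mathcal H^{n,n-1}$, so it suffices to show: for local holomorphic coordinates $t$ on $B$, each $j$, and every local holomorphic section $u$ of $\mathcal H^{n,n-1}$ with representative $\mathbf u$, the function $t\mapsto\langle\kappa(\partial/\partial t^j),u^t\rangle$ is holomorphic. Choose a smooth $(1,0)$-vector field $V_j$ with $\pi_{*}V_j=\partial/\partial t^j$. Since the bundle map $d\pi\colon T_{\mathcal X}\to\pi^{*}T_B$ is holomorphic and $d\pi(V_j)$ is holomorphic, $d\pi(\dbar V_j)=\dbar(d\pi V_j)=0$, so $\dbar V_j$ takes values in the relative tangent bundle $T_{\mathcal X/B}=\ker d\pi$, and its restriction $i_t^{*}(\dbar V_j)$ is the usual Dolbeault representative of $\kappa(\partial/\partial t^j)$. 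Contracting the $T_{\mathcal X/B}$-part of $\dbar V_j$ against the $T^*_{\mathcal X/B}$-part of $\mathbf u$ and wedging the form parts produces a scalar $(n,n)$-form $\dbar V_j\cdot\mathbf u$ on $\mathcal X$ with $i_t^{*}(\dbar V_j\cdot\mathbf u)=(i_t^{*}\dbar V_j)\cdot u^t$, whence
\begin{equation*}
\langle\kappa(\partial/\partial t^j),u^t\rangle=\pi_{*}\big(\dbar V_j\cdot\mathbf u\big).
\end{equation*}

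\emph{Differentiation.} Because $\pi_{*}$ commutes with the Lie derivative along any lift of a base vector field, $\frac{\partial}{\partial\bar t^k}\pi_{*}(\dbar V_j\cdot\mathbf u)=\pi_{*}\big(L_{\overline{V_k}}(\dbar V_j\cdot\mathbf u)\big)$. Using Cartan's formula $L_{\overline{V_k}}=d\delta_{\overline{V_k}}+\delta_{\overline{V_k}}d$, the identity $\pi_{*}d=d\pi_{*}$, and the fact that $\pi_{*}$ annihilates any form of antiholomorphic fibre degree $<n$ (so the $d\delta_{\overline{V_k}}$-term dies and only the $\dbar$-part of $d$ survives in the $\delta_{\overline{V_k}}d$-term), then using $\dbar(\dbar V_j)=0$ (as $T_{\mathcal X/B}\subset T_{\mathcal X}$ is a holomorphic subbundle) with the Leibniz rule to get $\dbar(\dbar V_j\cdot\mathbf u)=-\,\dbar V_j\cdot\dbar\mathbf u$, and discarding the term with $(\delta_{\overline{V_k}}\dbar V_j)\cdot\dbar\mathbf u$ since $i_t^{*}\dbar\mathbf u=\dbar^t u^t=0$, one arrives at
\begin{equation*}
\frac{\partial}{\partial\bar t^k}\langle\kappa(\partial/\partial t^j),u^t\rangle=\pm\int_{X_t}\big(i_t^{*}\dbar V_j\big)\cdot\,i_t^{*}\big(\delta_{\overline{V_k}}\dbar\mathbf u\big).
\end{equation*}
Now $i_t^{*}\big(\delta_{\overline{V_k}}\dbar\mathbf u\big)=i_t^{*}[\dbar,\delta_{\overline{V_k}}]\mathbf u-\dbar^t\,i_t^{*}(\delta_{\overline{V_k}}\mathbf u)$, and since $u$ is holomorphic, Theorem \ref{th:holomorphic} (with its proof) shows $i_t^{*}[\dbar,\delta_{\overline{V_k}}]\mathbf u$ is $\dbar^t$-closed with vanishing harmonic part, hence $\dbar^t$-exact; thus $i_t^{*}(\delta_{\overline{V_k}}\dbar\mathbf u)=\dbar^t\psi$ for some $\psi$. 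As $i_t^{*}\dbar V_j$ is $\dbar^t$-closed and the contraction $T_{X_t}\otimes T^*_{X_t}\to\mathcal O$ is holomorphic, $(i_t^{*}\dbar V_j)\cdot\dbar^t\psi=\pm\dbar^t\big((i_t^{*}\dbar V_j)\cdot\psi\big)$, whose integral over the compact $X_t$ vanishes by Stokes. Therefore the $\bar t^k$-derivative is zero for all $j,k$, i.e. $\kappa$ is holomorphic.

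\emph{Where the difficulty lies.} The decisive step is the last one: recognizing $i_t^{*}(\delta_{\overline{V_k}}\dbar\mathbf u)$ as $\dbar^t$-exact. This is exactly where the hypothesis enters, through the holomorphic structure $D^{0,1}$ of Theorem \ref{th:holomorphic}: the $\bar t$-derivative of the Serre pairing picks up only the ``non-holomorphic part'' of $u$, which vanishes by the very definition of $D^{0,1}$. The surrounding work — global well-definedness of $\dbar V_j\cdot\mathbf u$ and the bidegree bookkeeping under $\pi_{*}$ — is routine and parallels the computations leading to Theorem \ref{th:theta_jk}. Conceptually, the proposition amounts to the assertion that the Kodaira–Spencer morphism of $\mathcal O_B$-modules $T_B\to R^1\pi_{*}T_{\mathcal X/B}$, composed with the relative Serre-duality isomorphism $R^1\pi_{*}T_{\mathcal X/B}\simeq\big(R^{n-1}\pi_{*}\mathcal O(K_{\mathcal X/B}\otimes T^*_{\mathcal X/B})\big)^{*}$, is holomorphic; the argument above renders this explicit within the paper's analytic framework.
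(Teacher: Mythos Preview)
Your proposal is correct and follows essentially the same approach as the paper: both arguments express the Serre pairing as a fibre integral $\pi_*(\dbar V\cdot\mathbf u)$, differentiate in $\bar t$, and reduce the vanishing of the $\bar t$-derivative to the fact that, for a holomorphic section $u$, the relevant component of $\dbar\mathbf u$ restricts to a $\dbar^t$-exact form on each fibre (which is exactly the content of Theorem~\ref{th:holomorphic}). The paper's execution is a bit more economical: rather than going through the Lie derivative and Cartan's formula, it simply invokes $\dbar\pi_*=\pi_*\dbar$, writes $\dbar\mathbf u=\sum dt^j\wedge a_j+\sum d\bar t^k\wedge b_k$, and observes that $\pi_*(b_k\wedge\dbar V)=0$ because each $b_k|_{X_t}$ is $\dbar^t$-exact --- but this is the same computation as yours, just packaged differently.
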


\begin{proof} Let $v: t \mapsto v^t$ be a holomorphic vector field on $B$. Let $V$ be a smooth $(1,0)$-vector field on the total space such that $\pi_* V=v$. Then we know that for each $t$, $(\dbar V)|_{X_t}$ defines a representative of $\kappa(v^t)$. It suffices to prove that
\begin{equation}
\pi_*(\mathbf u \wedge \dbar V)
\end{equation}
is holomorphic for every holomorphic section $u$ of $\mathcal H^{n,n-1}$, where $\mathbf u$ is an arbitrary representative of $u$. Let us write
\begin{equation}
\dbar \mathbf u=\sum dt^j\wedge a_j +\sum d\bar t^k \wedge b_k.
\end{equation} 
By the proof of Theorem \ref{th:holomorphic}, we know that each $b_k|_{X_t}$ is $\dbar^t$-closed and the cohomology class of $b_k|_{X_t}$ does not depend of the choice of $b_k$. Moreover, if $u$ is a holomorphic section then the cohomology class of $b_k|_{X_t}$ is zero. Thus 
\begin{equation}
\pi_*(b_k \wedge \dbar V)=0,
\end{equation}
and
\begin{equation}
\dbar \pi_*(\mathbf u \wedge \dbar V) = \pi_*(\dbar \mathbf u \wedge \dbar V)=\sum d\bar t^k \wedge \pi_*(b_k \wedge \dbar V)=0,
\end{equation}
which implies that $ \pi_*(\mathbf u \wedge \dbar V)$ is holomorphic. The proof is complete.
\end{proof}

\medskip

\textbf{Proof of Theorem \ref{th:wp-general}}:  Let $v: t \mapsto v^t$ be an arbitrary holomorphic vector field on $B$. Let us denote by $||v||_{WP}$ the upper semicontinuous regularization of
\begin{equation}
t\mapsto ||v^t||_{WP}.
\end{equation}
Then it suffices to prove that $\log ||v||_{WP}$ is plurisubharmonic or equal to $-\infty$ identically. Since there is a Zariski open subset, say $B_0$, of $B$ such that  the dimension of $H^{n,n-1}(T^*_{X_t})$ is a constant on $B_0$, by Theorem \ref{th:main}, we know that $\mathcal H^{n,n-1}$ is Griffiths-semipositive on the complement of a proper analytic subset. Thus by Proposition \ref{pr:ks-dual}, we know that $\log ||v||_{WP}$ is plurisubharmonic or equal to $-\infty$ identically on the complement of a proper analytic subset. Now it is sufficient to prove that $||v||_{WP}$ is locally bounded from above. 

Let $V$ be a smooth $(1,0)$-vector field on the total space such that $\pi_* V=v$.  By definition, we know that
\begin{equation}
||v^t||_{WP} \leq ||(\dbar V)|_{X_t}||_{H^{0,1}(T_{X_t})}.
\end{equation}
Since
\begin{equation}
t\mapsto ||(\dbar V)|_{X_t}||_{H^{0,1}(T_{X_t})},
\end{equation}
is smooth, we know that $||v||_{WP}$ is locally bounded from above. The proof is complete.

\section{Acknowledgement} 

I would like to thank Bo Berndtsson for many inspiring discussions (in particular for his suggestion to relate our results to the Weil-Petersson geometry), and for his many useful comments on this paper. Thanks are also given to Bo-Yong Chen and Qing-Chun Ji for their constant support and encouragement.

\end{document}